\newtheorem{theorem}{Theorem}[section]
\newtheorem{proposition}[theorem]{Proposition}
\newtheorem{lemma}[theorem]{Lemma}
\newtheorem{corollary}[theorem]{Corollary}
\DeclareMathOperator{\con}{con}
\DeclareMathOperator{\Perm}{Perm}
\DeclareMathOperator{\Stab}{Stab}
\DeclareMathOperator{\Sub}{Sub}
\numberwithin{equation}{section}
\renewcommand*\subjclass[2][2010]{\def\@subjclass{#2}\@ifundefined{subjclassname@#1}{\ClassWarning{\@classname}{Unknown edition (#1) of Mathematics Subject Classification; using '2010'.}}{\@xp\let\@xp\subjclassname\csname subjclassname@#1\endcsname}}
\begin{document}

\title[Cancellable elements of the lattice of semigroup varieties]{Cancellable elements of the lattice\\
of semigroup varieties: varieties satisfying\\
a permutational identity of length 3}

\author{B.~M.~Vernikov}

\address{Ural Federal University, Institute of Natural Sciences and Mathematics, Lenina 51, 620083 Ekaterinburg, Russia}

\email{bvernikov@gmail.com}

\date{}

\thanks{The work was partially supported by the Ministry of Education and Science of the Russian Federation (project 1.6018.2017/8.9) and by the Russian Foundation for Basic Research (grant No.\,17-01-00551).}

\begin{abstract}
We completely determine all semigroup varieties satisfiyng a permutational identity of length 3 that are cancellable elements of the lattice of all semigroup varieties. Using this result, we provide a series of new examples of semigroup varieties that are modular but not cancellable elements of this lattice.
\end{abstract}

\keywords{Semigroup, variety, lattice of varieties, permutational identity, cancellable element of a lattice}

\subjclass{Primary 20M07, secondary 08B15}

\maketitle

\section{Introduction and summary}
\label{intr}

There are a number of articles devoted to an examination of special elements in the lattice $\mathbb{SEM}$ of all semigroup varieties (see surveys~\cite[Section~14]{Shevrin-Vernikov-Volkov-09} and~\cite{Vernikov-15}).One can recall definitions of two types of special elements that appear below. An element $x$ of a lattice $\langle L;\vee,\wedge\rangle$ is called
\begin{align*}
&\text{\emph{modular} if}\enskip&&(\forall y,z\in L)\quad \bigl(y\le z\longrightarrow(x\vee y)\wedge z=(x\wedge z)\vee y\bigr),\\
&\text{\emph{cancellable} if}\enskip&&(\forall y,z\in L)\quad (x\vee y=x\vee z\ \&\ x\wedge y=x\wedge z\longrightarrow y=z).
\end{align*}
It is evident that every cancellable element of a lattice is modular. A valuable information about modular and cancellable elements in abstract lattices can be found in~\cite{Seselja-Tepavcevic-01}, for instance.

Several results about modular elements of the lattice $\mathbb{SEM}$ were provided in the papers~\cite{Jezek-McKenzie-93,Shaprynskii-12,Vernikov-07}, while cancellable elements of $\mathbb{SEM}$ were considered in~\cite{Gusev-Skokov-Vernikov-18+,Skokov-Vernikov-18+}. In particular, commutative semigroup varieties that are modular elements in $\mathbb{SEM}$ are completely determined by the author  in~\cite[Theorem~3.1]{Vernikov-07}. Further, it is verified by S.Gusev, D.Skokov and the author in~\cite[Theorem~1.1]{Gusev-Skokov-Vernikov-18+} that the properties to be modular and cancellable elements of $\mathbb{SEM}$ are equivalent in the class of commutative semigroup varieties. The question is naturally arised, whether this equivalence true for arbitrary semigroup varieties. The negative answer to this question is given in~\cite{Skokov-Vernikov-18+}. Recall that an identity of the form
\begin{equation}
\label{permut id}
x_1x_2\cdots x_n\approx x_{1\pi}x_{2\pi}\cdots x_{n\pi}
\end{equation}
where $\pi$ is a non-trivial permutation on the set $\{1,2,\dots,n\}$ is called a \emph{permutational} identity. The number $n$ is called a \emph{length} of the identity~\eqref{permut id}. Clearly, the commutative law is a permutational identity of length~2. In~\cite[Theorem~1.1]{Skokov-Vernikov-18+} (see Proposition~\ref{mod-permut-3} below) modular elements of the lattice $\mathbb{SEM}$ are described within the class of varieties satisfying a permutational identity of length~3. After that, among varieties satisfying the conclusion of this statement, a variety is found that is not a cancellable element in $\mathbb{SEM}$~\cite[Proposition~1.2]{Skokov-Vernikov-18+}.

Here we continue examinationes started in~\cite{Gusev-Skokov-Vernikov-18+,Skokov-Vernikov-18+}. Namely, we completely classify varieties that satisfy a  permutational identity of length~3 and are cancellable elements in $\mathbb{SEM}$. Comparison of this result and~\cite[Theorem~1.1]{Skokov-Vernikov-18+} permits to specify a number of new examples of modular but not cancellable elements in $\mathbb{SEM}$. 

A semigroup variety is called a \emph{nil-variety} if it consists of nilsemigroups. Semigroup words unlike letters are written in bold. Two sides of identities we connect by the symbol~$\approx$, while the symbol~$=$ stands for the equality relation on the free semigroup. As usual, we write the pair of identities $x\mathbf{u\approx u}x\approx\mathbf u$ where the letter $x$ does not occur in the word \textbf u in the short form $\mathbf u\approx 0$ and refer to the expression $\mathbf u\approx 0$ as to a single identity.  We denote by \textbf T the trivial semigroup variety and by \textbf{SL} the variety of all semilattices.

The main result of the article is the following

\begin{theorem}
\label{canc-permut-3}
A semigroup variety $\mathbf V$ satisfying a permutational identity of length $3$ is a cancellable element in the lattice $\mathbb{SEM}$ if and only if $\mathbf V=\mathbf{M\vee N}$ where $\mathbf M$ is one of the  varieties $\mathbf T$ or $\mathbf{SL}$, while the variety $\mathbf N$ satisfies the following identities:
\begin{align}
\label{xyz=yxz=xzy}xyz&\approx yxz\approx xzy,\\
\label{xxy=0}x^2y&\approx 0.
\end{align}
\end{theorem}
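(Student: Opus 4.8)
The plan is to use the containment of cancellable elements among the modular ones as the organizing principle. Since every cancellable element of a lattice is modular, any variety $\mathbf V$ satisfying a permutational identity of length $3$ and cancellable in $\mathbb{SEM}$ must already occur in the list furnished by Proposition~\ref{mod-permut-3}. I would therefore first reduce to that list: write such a $\mathbf V$ as $\mathbf M\vee\mathbf N$ in the shape supplied by Proposition~\ref{mod-permut-3}, with $\mathbf M\in\{\mathbf T,\mathbf{SL}\}$ and $\mathbf N$ the nil-part satisfying the permutational identity~\eqref{xyz=yxz=xzy}, and then decide exactly which of these are cancellable. The theorem asserts that the dividing line is the extra identity~\eqref{xxy=0} on $\mathbf N$, so the task splits into proving that~\eqref{xxy=0} is \emph{necessary} and that it is \emph{sufficient}.

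For necessity I would argue contrapositively: assuming that the nil-part $\mathbf N$ fails~\eqref{xxy=0}, I would exhibit two distinct varieties $\mathbf X\ne\mathbf Y$ with $\mathbf V\vee\mathbf X=\mathbf V\vee\mathbf Y$ and $\mathbf V\wedge\mathbf X=\mathbf V\wedge\mathbf Y$, contradicting cancellability. The witnesses should be small nil-varieties differing only in whether a word of the shape $x^2y$ is made a zero. The modular but non-cancellable variety constructed in~\cite[Proposition~1.2]{Skokov-Vernikov-18+}, drawn from the same modular list, is the natural seed; I would adapt its pair $(\mathbf X,\mathbf Y)$ to the present $\mathbf V$, checking that the failure of~\eqref{xxy=0} is precisely what allows the two joins and the two meets to coincide while keeping $\mathbf X\ne\mathbf Y$.

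The sufficiency direction is where the real work lies. Here I take $\mathbf V=\mathbf M\vee\mathbf N$ with $\mathbf N$ satisfying both~\eqref{xyz=yxz=xzy} and~\eqref{xxy=0}, fix arbitrary $\mathbf X,\mathbf Y$ with $\mathbf V\vee\mathbf X=\mathbf V\vee\mathbf Y$ and $\mathbf V\wedge\mathbf X=\mathbf V\wedge\mathbf Y$, and deduce $\mathbf X=\mathbf Y$. By symmetry it is enough to prove $\mathbf X\subseteq\mathbf Y$, that is, that every identity of $\mathbf Y$ holds in $\mathbf X$. My strategy is to translate each hypothesis into identity-theoretic terms: the meet equality controls the identities common to $\mathbf V$ and $\mathbf X$ (resp.\ $\mathbf Y$), while the join equality controls the words that $\mathbf X$ (resp.\ $\mathbf Y$) may distinguish modulo $\mathbf V$. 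The identities~\eqref{xyz=yxz=xzy} and~\eqref{xxy=0} make the nil-part $\mathbf N$ so restrictive — forcing the relevant words with a repeated letter to vanish and the multilinear length-$3$ words to commute — that the pair $(\mathbf V\vee\mathbf X,\mathbf V\wedge\mathbf X)$ retains enough information to recover $\mathbf X$. I would organize the argument by a case analysis on the position of $\mathbf X$ and $\mathbf Y$ relative to $\mathbf{SL}$ and to the overcommutative varieties, treating the nil and the non-nil fragments separately.

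The main obstacle I anticipate is exactly this last verification: cancellability is quantified over all of $\mathbb{SEM}$, so it cannot be checked on a short list of test varieties. The delicate point is to show that no information about $\mathbf X$ is lost in passing to the two data $\mathbf V\vee\mathbf X$ and $\mathbf V\wedge\mathbf X$; this is where the precise strength of~\eqref{xxy=0} must be used, and where an argument tailored to nil-varieties satisfying a permutational identity — building on the techniques of~\cite{Gusev-Skokov-Vernikov-18+,Skokov-Vernikov-18+} — will be required.
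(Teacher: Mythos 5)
Your necessity argument rests on a misreading of Proposition~\ref{mod-permut-3}, and this puts the dividing line in the wrong place. That proposition does not give a nil-part $\mathbf N$ satisfying~\eqref{xyz=yxz=xzy}: each of its four identity systems contains only \emph{one} permutational identity of length~3, so the corresponding groups $\Perm_3(\mathbf N)$ are the proper subgroups $T_{13}$, $C_{123}$, $T_{12}$, $T_{23}$ of $S_3$; moreover, three of the four systems already contain $x^2y\approx 0$. Hence the gap between modular and cancellable here is not the identity~\eqref{xxy=0}, as you assert, but the requirement that $\Perm_3(\mathbf N)$ be all of $S_3$, i.e.\ that $\mathbf N$ satisfy \emph{both} identities in~\eqref{xyz=yxz=xzy}. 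Your contrapositive plan (assume $\mathbf N$ fails~\eqref{xxy=0} and build a non-cancelling pair) therefore never touches the decisive cases: for instance, the variety given by $xyz\approx zyx$ and $x^2y\approx 0$ satisfies~\eqref{xxy=0} and is modular by system~(i), yet it is not cancellable --- indeed the paper notes that the varieties given by each of systems (i)--(iv) are modular but not cancellable. What excludes these varieties in the paper is a genuine transfer theorem, Proposition~\ref{cancellable from V to Perm_n(V)}: if $\mathbf V$ is cancellable in $\mathbb{SEM}$, then $\Perm_n(\mathbf V)$ is cancellable in $\Sub(S_n)$; combined with Lemma~\ref{subgroups cancellable} (only $T$ and $S_n$ are cancellable subgroups of $S_n$), this forces all permutational identities of length~3 to hold, after which Corollary~\ref{non-linear = 0} supplies~\eqref{xxy=0}. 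The proof of that transfer theorem --- constructing from a pair of distinct subgroups $X_1\ne X_2$ a pair of distinct nil-varieties $\mathbf X_1\ne\mathbf X_2$ and verifying $\mathbf{V\vee X}_1=\mathbf{V\vee X}_2$ and $\mathbf{V\wedge X}_1=\mathbf{V\wedge X}_2$ by a syntactic analysis of deductions --- is the technical core of the paper, and nothing in your proposal substitutes for it.

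The sufficiency half of your proposal is an outline rather than a proof: you state that the pair $(\mathbf{V\vee X},\mathbf{V\wedge X})$ should retain enough information to recover $\mathbf X$, and you acknowledge that verifying this is the main obstacle, but that verification \emph{is} the theorem. The paper's argument has a concrete structure you would need to reproduce: reduce to $\mathbf{V}=\mathbf N$ by Lemma~\ref{join with SL}; observe that $\mathbf N$ is modular (it satisfies $xyz\approx zyx$, so Proposition~\ref{mod-permut-3} applies); given $\mathbf Y\ne\mathbf Z$ witnessing non-cancellability, invoke Lemma~\ref{modular non-cancellable} to replace $\mathbf N$ by $\mathbf N'\subseteq\mathbf N$ with $\mathbf{N'\vee Y}=\mathbf{N'\vee Z}=\mathbf{Y\vee Z}$ (this extra equality is what makes the join hypothesis usable); then, using that modulo $\mathbf N'$ every non-linear word except $x^2$ equals $0$, reduce to the single remaining case where the identity under consideration is the commutative law, and finish with the degree argument and the seven-case analysis built on the identities $xy\approx x^{m+1}y$, $xy\approx xy^{m+1}$ and $xy\approx(xy)^{m+1}$. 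A case analysis ``on the position of $\mathbf X$ and $\mathbf Y$ relative to $\mathbf{SL}$ and the overcommutative varieties'' does not engage with any of this; as written, your sufficiency section is a statement of intent, not an argument.
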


It is easy to see that if a variety satisfies the identities~\eqref{xyz=yxz=xzy} then it satisfies all permutational identities of length~3. Thus, Theorem~\ref{canc-permut-3} shows that if a cancellable element of $\mathbb{SEM}$ satisfies one permutational identity of length~3 then it satisfies all identities of such a form. As we will seen below, the analog of this claim is true for permutational identities of arbitrary length (see Proposition~\ref{alternative for cancellable}).

The article consists of three sections. Section~\ref{prel} contains auxiliary results. In Section~\ref{proof} we verify Theorem~\ref{canc-permut-3}.

\section{Preliminaries}
\label{prel}

First of all, we reproduce the main result of the article~\cite{Gusev-Skokov-Vernikov-18+}.

\begin{proposition}[\textup{\!\!\cite[Theorem~1.1]{Gusev-Skokov-Vernikov-18+}}]
\label{mod-permut-3}
A semigroup variety $\mathbf V$ satisfying a permutational identity of length $3$ is a modular element in the lattice $\mathbb{SEM}$ if and only if $\mathbf V=\mathbf{M\vee N}$ where $\mathbf M$ is one of the  varieties $\mathbf T$ or $\mathbf{SL}$, while the variety $\mathbf N$ satisfies one of the following identity systems: \textup{(i)}~$xyz\approx zyx$, $x^2y\approx 0$; \textup{(ii)}~$xyz\approx yzx$, $x^2y\approx 0$; \textup{(iii)}~$xyz\approx yxz$, $xyzt\approx xzty$, $xy^2\approx 0$; \textup{(iv)}~$xyz\approx xzy$, $xyzt\approx yzxt$, $x^2y\approx 0$.\qed
\end{proposition}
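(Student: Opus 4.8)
The plan is to prove the two implications separately after a structural reduction that isolates all the difficulty in the nil part. A length-$3$ permutational identity does not force periodicity (witness the commutative semigroups), so the reduction to nil-varieties is a consequence of modularity, not of the identity. I would argue as follows: the identity already forces every band in $\mathbf V$ to be a semilattice and every group in $\mathbf V$ to be abelian, so the only obstructions to the shape $\mathbf{M}\vee\mathbf{N}$ with $\mathbf M\in\{\mathbf T,\mathbf{SL}\}$ and $\mathbf N$ nil are nontrivial abelian groups and non-periodic members (such as the free commutative semigroup). Invoking the known structural restrictions on modular elements of $\mathbb{SEM}$, each such obstruction yields a pair $\mathbf Y\le\mathbf Z$ at which the modular law fails; ruling them out places the completely regular part of $\mathbf V$ inside $\mathbf{SL}$ and leaves the remainder nil. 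After this, both directions become statements about which nil-varieties $\mathbf N$ satisfying one of the five length-$3$ permutational identities make both $\mathbf N$ and $\mathbf{SL}\vee\mathbf N$ modular. Since $\mathbb{SEM}$ carries a lattice automorphism given by word reversal (passing to opposite semigroups), under which modularity is invariant, I would treat only the reversal identity $xyz\approx zyx$, one $3$-cycle $xyz\approx yzx$, and one adjacent transposition $xyz\approx yxz$; the second $3$-cycle defines the same variety (apply the identity twice) and $xyz\approx xzy$ is the mirror of $xyz\approx yxz$.

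For necessity I would work inside the relatively free nil-semigroup of $\mathbf N$, whose nonzero elements are exactly the words not forced to $0$, and test the modular law on small explicit pairs $\mathbf Y\le\mathbf Z$ assembled from $\mathbf{SL}$, from zero-multiplication varieties, and from $2$- and $3$-generated nil-varieties. Each violation I can arrange returns an identity that $\mathbf N$ is forced to satisfy. For the reversal and the $3$-cycle this should pin down $x^2y\approx 0$, and I would record that in these two cases the mirror $xy^2\approx 0$ follows by using the permutational identity to transport a square from one end of a word to the other, so no further $0$-reduced identity is needed. For the adjacent transposition $xyz\approx yxz$ the permutation is too weak: it commutes only the first two letters and leaves long words rigid, and I expect to exhibit a pair violating the modular law unless $\mathbf N$ also satisfies $xy^2\approx 0$ and the length-$4$ identity $xyzt\approx xzty$. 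Building the precise witnessing pairs, and checking that no weaker identity set survives, is the combinatorial heart of this direction.

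For sufficiency I would verify directly that each listed $\mathbf N$, and each $\mathbf{SL}\vee\mathbf N$, satisfies the modular law. The tool here is a description of the identities of $\mathbf{SL}\vee\mathbf N$, of joins $(\mathbf M\vee\mathbf N)\vee\mathbf Y$, and of meets with a variety $\mathbf Z$, all expressed through the $0$-reduced and permutational identities of the ingredients. The systems (i)--(iv) are engineered so that the set of nonzero words is closed under the relevant rearrangements and carries a canonical normal form: once $x^2y\approx 0$ and its mirror hold, every word with an adjacent repeated letter vanishes, which sharply restricts the surviving words and lets the permutational identity resolve each of them to a canonical representative. For a given $\mathbf Y\le\mathbf Z$ I would then compute the identity sets of $(\mathbf V\vee\mathbf Y)\wedge\mathbf Z$ and $(\mathbf V\wedge\mathbf Z)\vee\mathbf Y$ and match them word by word.

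The main obstacle, I expect, is sufficiency rather than necessity: proving the four families really are modular requires controlling joins with an arbitrary $\mathbf Y$, hence an exact grip on the identities of $\mathbf M\vee\mathbf N\vee\mathbf Y$, where the normal-form bookkeeping becomes delicate. A secondary difficulty is the transposition case, where $xyzt\approx xzty$ must be shown simultaneously necessary (a modularity failure when it is dropped) and, together with $xyz\approx yxz$ and $xy^2\approx 0$, sufficient. Throughout, duality halves the casework, and the $0$-reduced identities, once in hand, collapse the word combinatorics enough to make the final matching feasible.
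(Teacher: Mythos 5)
Note first that the paper you were set against contains no proof of this proposition: it is imported verbatim, with an immediate end-of-proof mark, from \cite[Theorem~1.1]{Skokov-Vernikov-18+} (the introduction makes the attribution clear, despite the citation in the proposition's header), so your outline has to stand on its own. It does not, because both directions defer exactly the content that makes the theorem nontrivial. For necessity you say you would ``test the modular law on small explicit pairs'' and that you ``expect to exhibit a pair violating the modular law unless'' the extra identities hold, conceding that building the witnessing pairs ``is the combinatorial heart of this direction'' --- but no pair is constructed, and this is precisely where the theorem lives: one must force $x^2y\approx 0$ (and not merely some weaker $0$-reduced system) in cases (i), (ii), (iv), force $xy^2\approx 0$ in case (iii), and, hardest of all, extract the length-$4$ identities $xyzt\approx xzty$ and $xyzt\approx yzxt$ in the two transposition cases. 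The established route to these facts is not ad hoc pairs but the strong necessary conditions from \cite{Vernikov-07} (Theorem~2.5 there, together with the transfer results appearing in the present paper as Lemma~\ref{modular from V to Perm_n(V)} and Corollary~\ref{non-linear = 0}), none of which your plan engages --- and note that the group-theoretic transfer alone is useless here, since $\Sub(S_3)$ is a modular lattice, so every subgroup of $S_3$ is a modular element and $\Perm_3(\mathbf V)$ yields no constraint. For sufficiency, ``compute the identity sets of $(\mathbf V\vee\mathbf Y)\wedge\mathbf Z$ and $(\mathbf V\wedge\mathbf Z)\vee\mathbf Y$ and match them word by word'' is a restatement of the problem, not a method: controlling the identities of $\mathbf{V\vee Y}$ for an arbitrary $\mathbf Y$ is the hard part, and you supply no normal-form lemma that achieves it. The known proof bypasses this entirely via Shaprynski\v{\i}'s criterion \cite{Shaprynskii-12} that a variety given by $0$-reduced and substitutive identities is a modular element of $\mathbb{SEM}$, after verifying that every nil-variety satisfying one of the systems (i)--(iv) is defined by identities of that shape, and then joining with $\mathbf{SL}$ via the appropriate preservation lemma.

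There is also one outright error in your reduction: the claim that a permutational identity of length $3$ ``forces every band in $\mathbf V$ to be a semilattice'' is false --- right-zero bands satisfy $xyz\approx yxz$ and left-zero bands satisfy $xyz\approx xzy$, so your list of obstructions (abelian groups and non-periodic members) is incomplete. This is repairable, since the reduction to $\mathbf{M\vee N}$ with $\mathbf M\in\{\mathbf T,\mathbf{SL}\}$ and $\mathbf N$ nil follows in one stroke from the Je\v{z}ek--McKenzie/Vernikov necessary condition (Proposition~\ref{nec} of this paper), which you also invoke; but as written the structural paragraph proves nothing by itself. On the credit side, your duality bookkeeping is correct and is genuinely how the casework is halved in practice: word reversal induces an automorphism of $\mathbb{SEM}$ preserving modularity, $xyz\approx yzx$ and $xyz\approx zxy$ define the same variety, and systems (iii) and (iv) are mirror images (using that $xyzt\approx yzxt$ applied twice yields $xyzt\approx zxyt$). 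The verdict: a sensible research plan with correct peripheral reductions, but its self-described combinatorial heart is absent on both directions, so it cannot be counted as a proof.
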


Comparison of Theorem~\ref{canc-permut-3} and Proposition~\ref{mod-permut-3} allows us to provide many examples of modular but not cancellable elements in the lattice $\mathbb{SEM}$. In particular, the varieties given by each of the identity systems (i)--(iv)  listed in Proposition~\ref{mod-permut-3} have this property.

\begin{lemma}
\label{modular non-cancellable}
\!\!\textup{(\!\!\cite[Lemma~2.3]{Gusev-Skokov-Vernikov-18+})} Let $x$ be a modular but not cancellable element of a lattice $L$ and let $y$ and $z$ be different elements of $L$ such that $x\vee y=x\vee z$ and $x\wedge y=x\wedge z$. Then there is an element $x'\in L$ such that $x'\le x$, $x'\vee y=x'\vee z$, $x'\wedge y=x'\wedge z$ and $y\vee z=x'\vee y$.\qed
\end{lemma}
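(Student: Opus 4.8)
The plan is to exhibit the witness $x'$ explicitly and verify the four required relations by direct computation, using only the modularity of $x$ together with the two defining equalities $x\vee y=x\vee z$ and $x\wedge y=x\wedge z$. The candidate I would take is
\[
x'=x\wedge(y\vee z).
\]
Then $x'\le x$ is immediate. The only preliminary observation needed is that $x\vee y=x\vee z$ lies above both $y$ and $z$, hence above their join; that is, $y\vee z\le x\vee y$.

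First I would settle the two join conditions. Applying the modular law for $x$ (in the form $(x\vee a)\wedge b=(x\wedge b)\vee a$ valid whenever $a\le b$) to the pair $a=y$, $b=y\vee z$, and then using $y\vee z\le x\vee y$, gives
\[
x'\vee y=\bigl(x\wedge(y\vee z)\bigr)\vee y=(x\vee y)\wedge(y\vee z)=y\vee z.
\]
Running the same argument with $a=z$, $b=y\vee z$, and invoking $x\vee z=x\vee y\ge y\vee z$, yields likewise $x'\vee z=y\vee z$. Thus $x'\vee y=x'\vee z=y\vee z$, which simultaneously establishes $x'\vee y=x'\vee z$ and $y\vee z=x'\vee y$.

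Next I would handle the meet condition, which needs no modularity at all, only associativity of $\wedge$ and absorption. Since $y\le y\vee z$ we have $(y\vee z)\wedge y=y$, so
\[
x'\wedge y=\bigl(x\wedge(y\vee z)\bigr)\wedge y=x\wedge\bigl((y\vee z)\wedge y\bigr)=x\wedge y,
\]
and symmetrically $x'\wedge z=x\wedge z$. The hypothesis $x\wedge y=x\wedge z$ then forces $x'\wedge y=x'\wedge z$, completing the verification.

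I do not expect a genuine obstacle here: once the candidate $x'=x\wedge(y\vee z)$ is guessed, everything reduces to choosing the correct two instances of the modular law. The single point worth flagging is that the hypotheses that $x$ be \emph{non-cancellable} and that $y\ne z$ are not actually consumed by this construction; they merely describe the setting in which the lemma is applied, where the existence of such $y,z$ is guaranteed precisely by the failure of cancellability. The content of the lemma is thus the sharpening $y\vee z=x'\vee y$ together with $x'\le x$, both of which drop out of the modular identity above.
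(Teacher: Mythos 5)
Your proof is correct: the candidate $x'=x\wedge(y\vee z)$ works, and your two applications of the modular law (with $y\le y\vee z$ and $z\le y\vee z$) together with the absorption computation for the meets verify all four conditions; your observation that $y\ne z$ and non-cancellability are not consumed is also accurate. The paper itself only cites this lemma from \cite[Lemma~2.3]{Gusev-Skokov-Vernikov-18+} without reproducing a proof, and your argument is the standard one used there, based on exactly this choice of $x'$.
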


The following lemma immediately follows from Lemmas~2.1 and~2.8 of the article~\cite{Gusev-Skokov-Vernikov-18+}.

\begin{lemma}
\label{join with SL}
A semigroup variety $\mathbf X$ is a cancellable element of the lattice $\mathbb{SEM}$ if and only if the variety $\mathbf{X\vee SL}$ has this property.\qed
\end{lemma}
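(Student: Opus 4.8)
The plan is to isolate the one nontrivial ingredient behind \cite{Gusev-Skokov-Vernikov-18+} and then run two essentially dual lattice computations. The ingredient I would take from \cite[Lemmas~2.1 and~2.8]{Gusev-Skokov-Vernikov-18+} is that $\mathbf{SL}$ is a \emph{neutral} element of $\mathbb{SEM}$; since it is also an atom (its only proper subvariety is $\mathbf T$), I derive that it is \emph{join-prime}: if $\mathbf{SL}\le\mathbf{A\vee B}$ then $\mathbf{SL}=\mathbf{SL}\wedge(\mathbf{A\vee B})=(\mathbf{SL\wedge A})\vee(\mathbf{SL\wedge B})$ by distributivity, and as each meet lies below the atom $\mathbf{SL}$ it equals $\mathbf T$ or $\mathbf{SL}$, so $\mathbf{SL}\le\mathbf A$ or $\mathbf{SL}\le\mathbf B$. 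I record two consequences used throughout: all distributive laws involving $\mathbf{SL}$ and two further varieties hold, and $\mathbf{SL}$ is itself cancellable (if $\mathbf{SL\vee A=SL\vee B}$ and $\mathbf{SL\wedge A=SL\wedge B}$ then $\mathbf A=(\mathbf{SL\wedge A})\vee(\mathbf{A\wedge B})=(\mathbf{SL\wedge B})\vee(\mathbf{A\wedge B})=\mathbf B$). I would then treat the two implications separately, writing $\mathbf S=\mathbf{SL}$ for brevity.

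For the ``if'' part, assume $\mathbf{X\vee S}$ is cancellable and take $\mathbf Y,\mathbf Z$ with $\mathbf{X\vee Y=X\vee Z}$ and $\mathbf{X\wedge Y=X\wedge Z}$. The distributive laws show that $\mathbf{Y\vee S}$ and $\mathbf{Z\vee S}$ have equal join and equal meet with $\mathbf{X\vee S}$: the joins are both $\mathbf{X\vee Y\vee S}=\mathbf{X\vee Z\vee S}$, while $(\mathbf{X\vee S})\wedge(\mathbf{Y\vee S})=(\mathbf{X\wedge Y})\vee\mathbf S=(\mathbf{X\wedge Z})\vee\mathbf S=(\mathbf{X\vee S})\wedge(\mathbf{Z\vee S})$. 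Cancellability of $\mathbf{X\vee S}$ then gives $\mathbf{Y\vee S=Z\vee S}$, and it remains to see $\mathbf{Y\wedge S=Z\wedge S}$, for then cancellability of $\mathbf S$ yields $\mathbf{Y=Z}$. This is where join-primeness enters: if, say, $\mathbf{S\le Y}$ but $\mathbf{S\not\le Z}$, then $\mathbf{S\le Y\le X\vee Y=X\vee Z}$, so join-primeness forces $\mathbf{S\le X}$, whence $\mathbf{S\le X\wedge Y=X\wedge Z\le Z}$, a contradiction. Thus $\mathbf{S\le Y}\Leftrightarrow\mathbf{S\le Z}$, i.e.\ $\mathbf{Y\wedge S=Z\wedge S}$, and the claim follows.

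For the ``only if'' part, assume $\mathbf X$ is cancellable and take $\mathbf Y,\mathbf Z$ with $(\mathbf{X\vee S})\vee\mathbf Y=(\mathbf{X\vee S})\vee\mathbf Z$ and $(\mathbf{X\vee S})\wedge\mathbf Y=(\mathbf{X\vee S})\wedge\mathbf Z$. If $\mathbf{S\le X}$ the hypotheses already read $\mathbf{X\vee Y=X\vee Z}$, $\mathbf{X\wedge Y=X\wedge Z}$, and cancellability of $\mathbf X$ finishes. Otherwise $\mathbf{X\wedge S=T}$, and I would descend both coincidences from the level of $\mathbf{X\vee S}$ to the level of $\mathbf X$. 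Distributivity turns the meet hypothesis into $(\mathbf{X\wedge Y})\vee(\mathbf{S\wedge Y})=(\mathbf{X\wedge Z})\vee(\mathbf{S\wedge Z})$; since $\mathbf{X\wedge Y}$ and $\mathbf{X\wedge Z}$ lie below $\mathbf X$ and hence meet $\mathbf S$ in $\mathbf T$, a short case analysis on $\mathbf{S\wedge Y},\mathbf{S\wedge Z}\in\{\mathbf T,\mathbf S\}$ (the mixed case excluded exactly as above) together with cancellability of $\mathbf S$ yields $\mathbf{X\wedge Y=X\wedge Z}$ and $\mathbf{S\le Y}\Leftrightarrow\mathbf{S\le Z}$. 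The latter equivalence, via join-primeness, gives $(\mathbf{X\vee Y})\wedge\mathbf S=(\mathbf{X\vee Z})\wedge\mathbf S$, which with the join hypothesis $(\mathbf{X\vee Y})\vee\mathbf S=(\mathbf{X\vee Z})\vee\mathbf S$ and cancellability of $\mathbf S$ upgrades to $\mathbf{X\vee Y=X\vee Z}$. With both $\mathbf X$-coincidences in hand, cancellability of $\mathbf X$ gives $\mathbf{Y=Z}$.

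The only genuinely nonroutine point is the structural input that $\mathbf{SL}$ is a neutral, join-prime element of $\mathbb{SEM}$; everything else is distributive bookkeeping. I would stress why join-primeness, not merely neutrality, is indispensable: the delicate step in each direction is forcing the semilattice components $\mathbf{Y\wedge S}$ and $\mathbf{Z\wedge S}$ to agree, and this can fail for a ``large'' neutral element (the top element of a lattice is neutral, yet the equivalence breaks there). Thus the smallness of $\mathbf{SL}$, encoded as join-primeness, is exactly what makes the reduction go through, and this is precisely what \cite[Lemmas~2.1 and~2.8]{Gusev-Skokov-Vernikov-18+} supply.
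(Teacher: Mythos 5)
Your proposal is correct and takes essentially the same route as the paper: the paper derives this lemma immediately from Lemmas~2.1 and~2.8 of \cite{Gusev-Skokov-Vernikov-18+} (neutrality of $\mathbf{SL}$ in $\mathbb{SEM}$, together with the abstract fact that joining with a neutral atom preserves and reflects cancellability), and your argument is precisely a reconstruction of that cited material, deducing join-primeness and cancellability of $\mathbf{SL}$ from its being a neutral atom and then carrying out the distributive bookkeeping in both directions. All of your lattice computations check out, including the exclusion of the mixed cases via join-primeness and the final appeals to cancellability of $\mathbf{SL}$ and of $\mathbf X$.
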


We denote by $\mathbf{SEM}$ the variety of all semigroups. The following claim gives a strong necessary condition for a semigroup variety to be a modular element in the lattice $\mathbb{SEM}$.

\begin{proposition}
\label{nec}
If $\mathbf V$ is a modular  element of the lattice $\mathbb{SEM}$ then either $\mathbf V=\mathbf{SEM}$ or $\mathbf V=\mathbf{M\vee N}$ where $\mathbf M$ is one of the varieties $\mathbf T$ or $\mathbf{SL}$, while  $\mathbf N$ is a nil-variety.\qed
\end{proposition}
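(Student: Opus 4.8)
The plan is to argue the contrapositive and to organize everything around a single reduction: it suffices to show that a modular $\mathbf V\neq\mathbf{SEM}$ contains none of the three families of \emph{forbidden atoms} of $\mathbb{SEM}$ --- the left-zero variety $\mathbf{LZ}$, the right-zero variety $\mathbf{RZ}$, and the abelian-group varieties $\mathbf A_{p}$ of prime exponent. This reduction rests on the structural fact that the atoms of $\mathbb{SEM}$ are exactly $\mathbf{SL}$, the null variety $\mathbf{ZM}$, $\mathbf{LZ}$, $\mathbf{RZ}$, and the $\mathbf A_{p}$, together with the known description of those varieties whose atoms all lie among $\{\mathbf{SL},\mathbf{ZM}\}$: such a variety is precisely one of the form $\mathbf M\vee\mathbf N$ with $\mathbf M\in\{\mathbf T,\mathbf{SL}\}$ and $\mathbf N$ a nil-variety. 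The standard dichotomy underpins this: every semigroup variety either contains the variety $\mathbf{COM}$ of all commutative semigroups (the overcommutative case, in which all identities are balanced) or is periodic, since an unbalanced identity collapses --- after substituting one letter for all variables, squaring a letter first if needed to break an accidental length coincidence --- to a law $z^{a}\approx z^{b}$ with $a\neq b$. Because $\mathbf{COM}\supseteq\mathbf A_{p}$ for every prime $p$, a variety avoiding all $\mathbf A_{p}$ is automatically periodic, so the three forbidden atoms are the only obstructions to the desired form.

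For the heart of the reduction I would treat each forbidden atom $\mathbf B\in\{\mathbf{LZ},\mathbf{RZ},\mathbf A_{p}\}$ and show that $\mathbf B\subseteq\mathbf V\neq\mathbf{SEM}$ is incompatible with modularity. The method is to exhibit varieties $\mathbf Y\le\mathbf Z$, assembled from $\mathbf B$ together with the small atoms $\mathbf{SL}$ and $\mathbf{ZM}$, for which one can explicitly compute the joins and meets and check that $(\mathbf V\vee\mathbf Y)\wedge\mathbf Z\neq(\mathbf V\wedge\mathbf Z)\vee\mathbf Y$. The one-sided-zero cases $\mathbf{LZ}$ and $\mathbf{RZ}$ are dual to each other and live inside the (well-studied) lattice of combinatorial varieties, where such witnessing pairs are available; the group case $\mathbf A_{p}$ requires witnesses that detect the presence of a nontrivial group part $\gr\mathbf V\neq\mathbf T$.

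The genuinely hard subcase is $\mathbf A_{p}\subseteq\mathbf V$ when $\mathbf V$ is overcommutative, i.e.\ $\mathbf{COM}\subseteq\mathbf V\subsetneq\mathbf{SEM}$; here I would lean on the description of the interval $[\mathbf{COM},\mathbf{SEM}]$ due to Je\v{z}ek and McKenzie~\cite{Jezek-McKenzie-93} to locate a covering configuration breaking the modular law. I expect the construction of these witnessing triples to be the main obstacle throughout: one must find, inside the poorly controlled upper part of $\mathbb{SEM}$ (for the group and overcommutative subcases) and inside the intricate combinatorial region (for $\mathbf{LZ}$ and $\mathbf{RZ}$), concrete pairs $\mathbf Y\le\mathbf Z$ whose lattice operations with $\mathbf V$ are actually computable, and the computation must be uniform over the entire family of $\mathbf V$ containing the given atom. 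Once all three obstructions are excluded, the structural characterization quoted in the first paragraph delivers the decomposition $\mathbf V=\mathbf M\vee\mathbf N$ with $\mathbf M\in\{\mathbf T,\mathbf{SL}\}$ and $\mathbf N$ nil directly, which completes the proof.
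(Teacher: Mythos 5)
Your argument has a fatal gap at its very first step: the ``structural fact'' on which the whole reduction rests --- that a semigroup variety whose atoms all lie in $\{\mathbf{SL},\mathbf{ZM}\}$ (where $\mathbf{ZM}$ denotes the variety of semigroups with zero multiplication) is precisely one of the form $\mathbf{M\vee N}$ with $\mathbf M\in\{\mathbf T,\mathbf{SL}\}$ and $\mathbf N$ a nil-variety --- is false in the direction you need. Excluding $\mathbf{LZ}$, $\mathbf{RZ}$ and all the $\mathbf A_p$ is indeed necessary for such a decomposition, but it is far from sufficient. Concretely, consider $\mathbf C=\var\{xy\approx yx,\ x^2\approx x^3\}$. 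It is commutative and periodic, so it contains neither $\mathbf{LZ}$, $\mathbf{RZ}$ nor any $\mathbf A_p$ (the identity $x^2\approx x^3$ forces every group in $\mathbf C$ to be trivial); its only atoms are $\mathbf{SL}$ and $\mathbf{ZM}$. Yet $\mathbf C$ is not of the form $\mathbf{M\vee N}$. Indeed, any nil-variety $\mathbf N\subseteq\mathbf C$ satisfies $x^k\approx 0$ for some $k$, and together with $x^2\approx x^3$ this yields $x^2\approx 0$; consequently $\mathbf{SL}\vee\mathbf N$ satisfies $x^2y\approx xy^2$ (in $\mathbf{SL}$ both sides reduce to $xy$, and in $\mathbf N$ one computes $x^2y\approx x^2\approx x^2y^2\approx y^2\approx xy^2$). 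But $\mathbf C$ contains the three-element monoid $\{1,a,0\}$ with $a^2=0$, where $a^2\cdot 1=0\ne a=a\cdot 1^2$, so $x^2y\approx xy^2$ fails in $\mathbf C$. Thus after you have excluded the three families of forbidden atoms you are nowhere near the conclusion of the proposition: ruling out varieties such as $\mathbf C$ (equivalently, objects such as this monoid) is exactly where the real work lies, and it requires invoking modularity again in an essential way, not a purely structural count of atoms.

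A secondary weakness is that even the portion of your plan that is sound in principle --- showing that a modular $\mathbf V\ne\mathbf{SEM}$ contains none of $\mathbf{LZ}$, $\mathbf{RZ}$, $\mathbf A_p$ --- is left as a declaration of intent (witnessing pairs ``are available'', their construction is ``the main obstacle''), so no step of the argument is actually carried out. For calibration: the paper itself does not prove this proposition at all; it is quoted from the literature, having been established (in essence) in \cite[Proposition~1.6]{Jezek-McKenzie-93}, with the deduction of the present formulation given in \cite[Proposition~2.1]{Vernikov-07} and a direct, self-contained proof given in \cite{Shaprynskii-12}. Those proofs work by extracting explicit identities that a modular element $\mathbf V\ne\mathbf{SEM}$ must satisfy, which is a substantially stronger constraint than the absence of the forbidden atoms; if you want a self-contained argument, that is the route to follow.
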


This proposition was proved (in slightly weaker form and in some other terminology) in~\cite[Proposition~1.6]{Jezek-McKenzie-93}. A deduction of Proposition~\ref{nec} from~\cite[Proposition~1.6]{Jezek-McKenzie-93} was given explicitly in~\cite[Proposition~2.1]{Vernikov-07}. A direct and transparent proof of Proposition~\ref{nec} not depending on a technique from~\cite{Jezek-McKenzie-93} is provided in~\cite{Shaprynskii-12}. Note that an essetially stronger necessary condition for a semigroup variety to be a modular element in the lattice $\mathbb{SEM}$ than Proposition~\ref{nec} is given by the author in~\cite[Theorem~2.5]{Vernikov-07} but this stronger results will not be used below.

The following claim can be easily deduced from~\cite[Lemma~1]{Sapir-Sukhanov-81}.

\begin{lemma}
\label{split}
If a nil-variety of semigroups $\mathbf N$ satisfies an identity of the form
\begin{equation}
\label{x_1x_2...x_n=v}
x_1x_2\cdots x_n\approx\mathbf v
\end{equation}
then either $\mathbf N$ satisfies the identity
\begin{equation}
\label{nilpotence}
x_1x_2\cdots x_n\approx 0
\end{equation}
or the identity~\eqref{x_1x_2...x_n=v} is permutational.\qed
\end{lemma}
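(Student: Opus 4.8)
The plan is to compare the word $\mathbf v$ with the product $x_1x_2\cdots x_n$ and to split the argument according to which letters occur in $\mathbf v$ and with what multiplicities. Throughout I would use the standard fact that, since $\mathbf N$ is a nil-variety, its free monogenic member is a finite monogenic nilsemigroup, whence $\mathbf N$ satisfies an identity $z^m\approx0$ for some $m\ge1$; in particular every word having $z^m$ as a factor equals $0$ in $\mathbf N$.

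First I would dispose of the case $\con(\mathbf v)\ne\{x_1,\dots,x_n\}$. If some letter $y$ occurs in $\mathbf v$ but not among $x_1,\dots,x_n$, then the substitution $y\mapsto y^m$ leaves the left-hand side of~\eqref{x_1x_2...x_n=v} unchanged while turning $\mathbf v$ into a word containing the factor $y^m\approx0$; hence $x_1\cdots x_n\approx0$ and~\eqref{nilpotence} holds. Dually, if some $x_i$ does not occur in $\mathbf v$, the substitution $x_i\mapsto x_i^m$ turns the left-hand side into a word with the factor $x_i^m\approx0$ and fixes $\mathbf v$, so again~\eqref{nilpotence} follows. Thus whenever the contents differ, $\mathbf N$ satisfies~\eqref{nilpotence}, as required.

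There remains the case $\con(\mathbf v)=\{x_1,\dots,x_n\}$. If each $x_i$ occurs in $\mathbf v$ exactly once, then $\mathbf v=x_{1\pi}\cdots x_{n\pi}$ is a rearrangement of the left-hand side, so~\eqref{x_1x_2...x_n=v} is either trivial or a permutational identity of the form~\eqref{permut id}, and we are done. The main obstacle is the complementary subcase, in which all of $x_1,\dots,x_n$ occur but some letter $x_j$ occurs at least twice, so that $|\mathbf v|>n$. Here the power substitutions used above are useless: every letter is present on both sides, so raising any one of them to the $m$th power annihilates both sides simultaneously. To separate the two sides one must instead exploit the length increase, iterating~\eqref{x_1x_2...x_n=v} so as to produce words equal to $x_1\cdots x_n$ in $\mathbf N$ in which the multiplicity of $x_j$ grows without bound; once that multiplicity reaches $m$ the resulting word contains the factor $x_j^m\approx0$, forcing~\eqref{nilpotence}. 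This pumping is exactly what \cite[Lemma~1]{Sapir-Sukhanov-81} provides, and the cleanest route is simply to quote that lemma: in a nil-variety every nontrivial identity either has both sides equal to $0$ or is balanced, the two sides sharing the same letters with equal multiplicities. Applied to $x_1\cdots x_n\approx\mathbf v$, whose left-hand side is multilinear, the balanced alternative forces $\mathbf v$ to be a permutation of $x_1,\dots,x_n$ (the permutational case), while the other alternative gives~\eqref{nilpotence}, completing the proof.
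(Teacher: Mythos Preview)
The paper gives no proof of this lemma at all: it merely states that the claim ``can be easily deduced from~\cite[Lemma~1]{Sapir-Sukhanov-81}'' and appends a \qed. Your write-up ultimately does the same thing---you invoke \cite[Lemma~1]{Sapir-Sukhanov-81} to finish---so the approaches agree, and your treatment of the two easy subcases (content mismatch; $\mathbf v$ already a rearrangement) is correct and adds welcome detail.

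One caution about the informal pumping sketch you offer before deferring to the citation: the sentence ``once that multiplicity reaches $m$ the resulting word contains the factor $x_j^m\approx0$'' is not justified. A word over $\{x_1,\dots,x_n\}$ in which $x_j$ occurs $m$ times need not contain $x_j^m$ as a \emph{factor}; indeed, in the nil-variety defined by $x^2\approx0$ there are arbitrarily long square-free words over three letters, so high multiplicity of a letter never produces a square. The iteration you describe does make the length (and hence some multiplicity) grow without bound, but converting that into $x_1\cdots x_n\approx0$ requires a further argument---precisely the content of the Sapir--Sukhanov lemma you then cite. Since you do cite it, your proof stands; just delete or rephrase the misleading ``contains the factor $x_j^m$'' claim.
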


Let $n$ be a natural number. We denote by $S_n$ the full permutation group on the set $\{1,2,\dots,n\}$. If $1\le i\le n$ then we denote by $\Stab_n(i)$ the set of all permutations $\pi\in S_n$ with $i\pi=i$. Obviously, $\Stab_n(i)$ is a subgroup in $S_n$. Moreover, it is well known that $\Stab_n(i)$ is a maximal proper subgroup in $S_n$. Let $T$ be the trivial group, $T_{ij}$ be the group generated by the transposition $(ij)$, $C_{ijk}$ and $C_{ijk\ell}$ be the groups generated by the cycles $(ijk)$ and $(ijk\ell)$ respectively, $P_{ij,k\ell}$ be the group generated by the disjoint transpositions $(ij)$ and $(k\ell)$, $A_n$ be the alternative subgroup of $S_n$ and $V_4$ be the Klein four-group. The subgroup lattice of the group $G$ is denoted by $\Sub(G)$. We need to know the structure of the lattices $\Sub(S_3)$ and $\Sub(S_4)$.  It is generally known and easy to check that the first of these two lattices has the form shown in Fig.~\ref{Sub(S_3)}. Direct routine calculations allow to verify that the lattice $\Sub(S_4)$ is as shown in Fig.~\ref{Sub(S_4)}. 

\begin{figure}[tbh]
\unitlength=1mm
\special{em:linewidth 0.4pt}
\linethickness{0.4pt}
\begin{picture}(73,29)
\put(6,14){\circle*{1.33}}
\put(21,14){\circle*{1.33}}
\put(36,4){\circle*{1.33}}
\put(36,24){\circle*{1.33}}
\put(51,14){\circle*{1.33}}
\put(66,14){\circle*{1.33}}
\gasset{AHnb=0}
\drawline(36,4)(6,14)(36,24)(21,14)(36,4)(51,14)(36,24)(66,14)(36,4)
\put(5,14){\makebox(0,0)[rc]{$T_{12}$}}
\put(20,14){\makebox(0,0)[rc]{$T_{13}$}}
\put(52,14){\makebox(0,0)[lc]{$T_{23}$}}
\put(67,14){\makebox(0,0)[lc]{$C_{123}$}}
\put(36,27){\makebox(0,0)[cc]{$S_3$}}
\put(36,1){\makebox(0,0)[cc]{$T$}}
\end{picture}
\caption{The lattice $\Sub(S_3)$}
\label{Sub(S_3)}
\end{figure}
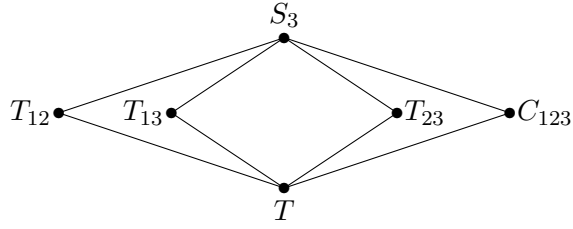

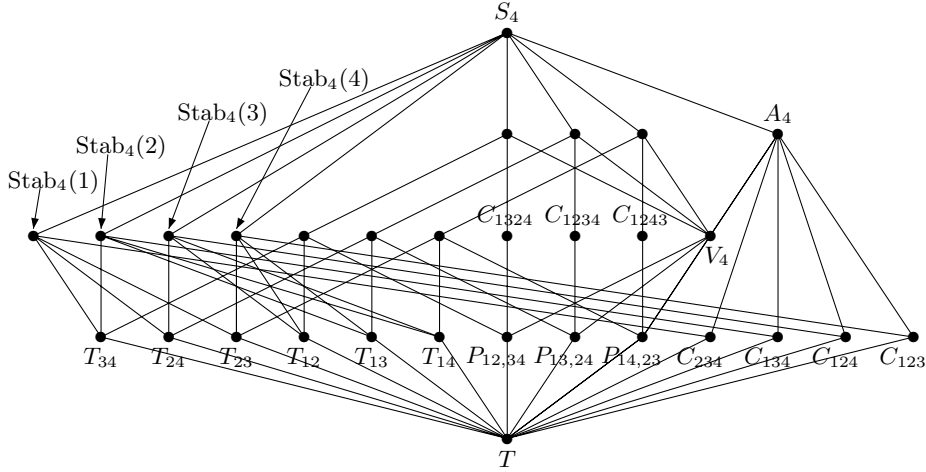
\begin{figure}[tbh]
\unitlength=0.9mm
\special{em:linewidth 0.4pt}
\linethickness{0.4pt}
\begin{picture}(135,70)
\put(73,5){\circle*{1.4}}
\put(13,20){\circle*{1.4}}
\put(23,20){\circle*{1.4}}
\put(33,20){\circle*{1.4}}
\put(43,20){\circle*{1.4}}
\put(53,20){\circle*{1.4}}
\put(63,20){\circle*{1.4}}
\put(73,20){\circle*{1.4}}
\put(83,20){\circle*{1.4}}
\put(93,20){\circle*{1.4}}
\put(103,20){\circle*{1.4}}
\put(113,20){\circle*{1.4}}
\put(123,20){\circle*{1.4}}
\put(133,20){\circle*{1.4}}
\put(3,35){\circle*{1.4}}
\put(13,35){\circle*{1.4}}
\put(23,35){\circle*{1.4}}
\put(33,35){\circle*{1.4}}
\put(43,35){\circle*{1.4}}
\put(53,35){\circle*{1.4}}
\put(63,35){\circle*{1.4}}
\put(73,35){\circle*{1.4}}
\put(83,35){\circle*{1.4}}
\put(93,35){\circle*{1.4}}
\put(103,35){\circle*{1.4}}
\put(73,50){\circle*{1.4}}
\put(83,50){\circle*{1.4}}
\put(93,50){\circle*{1.4}}
\put(113,50){\circle*{1.4}}
\put(73,65){\circle*{1.4}}
\gasset{AHnb=0}
\drawline(73,5)(13,20)(3,35)(23,20)(73,5)(33,20)(3,35)(73,65)(73,5)(83,20)(103,35)(93,20)(73,5)(103,20)(113,50)(93,20)(73,5)(123,20)(113,50)(133,20)(73,5)(43,20)(23,35)(73,65)(13,35)(13,20)(73,50)(103,35)(83,50)(73,65)(93,50)(103,35)(113,50)(73,65)(33,35)(33,20)(93,50)(93,20)(63,35)(63,20)(23,35)(123,20)
\drawline(23,35)(23,20)(83,50)(83,20)(53,35)(53,20)(73,5)(63,20)(13,35)(53,20)(33,35)(43,20)(43,35)(73,20)(103,35)
\drawline(33,20)(33,35)(133,20)
\drawline(13,35)(113,20)(73,5)
\drawline(113,20)(113,50)
\drawline(3,35)(103,20)
\footnotesize
\put(113,53){\makebox(0,0)[cc]{$A_4$}}
\put(128,17){\makebox(0,0)[lc]{$C_{123}$}}
\put(87,38){\makebox(0,0)[rc]{$C_{1234}$}}
\put(118,17){\makebox(0,0)[lc]{$C_{124}$}}
\put(97,38){\makebox(0,0)[rc]{$C_{1243}$}}
\put(77,38){\makebox(0,0)[rc]{$C_{1324}$}}
\put(108,17){\makebox(0,0)[lc]{$C_{134}$}}
\put(98,17){\makebox(0,0)[lc]{$C_{234}$}}
\put(76,17){\makebox(0,0)[rc]{$P_{12,34}$}}
\put(86,17){\makebox(0,0)[rc]{$P_{13,24}$}}
\put(96,17){\makebox(0,0)[rc]{$P_{14,23}$}}
\put(73,68){\makebox(0,0)[cc]{$S_4$}}
\put(6,43){\makebox(0,0)[cc]{$\Stab_4(1)$}}
\put(16,48){\makebox(0,0)[cc]{$\Stab_4(2)$}}
\put(31,53){\makebox(0,0)[cc]{$\Stab_4(3)$}}
\put(46,58){\makebox(0,0)[cc]{$\Stab_4(4)$}}
\put(73,2){\makebox(0,0)[cc]{$T$}}
\put(43,17){\makebox(0,0)[cc]{$T_{12}$}}
\put(53,17){\makebox(0,0)[cc]{$T_{13}$}}
\put(63,17){\makebox(0,0)[cc]{$T_{14}$}}
\put(33,17){\makebox(0,0)[cc]{$T_{23}$}}
\put(23,17){\makebox(0,0)[cc]{$T_{24}$}}
\put(13,17){\makebox(0,0)[cc]{$T_{34}$}}
\put(104,32){\makebox(0,0)[cc]{$V_4$}}
\gasset{AHnb=1}
\drawline(4,42)(3,36)
\drawline(14,47)(13,36)
\drawline(29,52)(23,36)
\drawline(44,57)(33,36)
\end{picture}
\caption{The lattice  $\Sub(S_4)$}
\label{Sub(S_4)}
\end{figure}

We need the following

\begin{lemma}
\label{subgroups cancellable}
A subgroup $G$ of the group $S_n$ is a cancellable element of the lattice $\Sub(S_n)$ if and only if either $G=T$ or $G=S_n$.
\end{lemma}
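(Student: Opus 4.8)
The plan is to prove the two implications separately, the backward one being immediate and the forward one (really its contrapositive) carrying all the work. Recall that in $\Sub(S_n)$ the join of two subgroups is the subgroup they generate and the meet is their intersection, while $T$ and $S_n$ are the least and greatest elements. Hence the ``if'' direction is a general lattice triviality requiring no group theory: the least element $0$ of any lattice satisfies $0\vee y=y$, so $0\vee y=0\vee z$ already forces $y=z$, and dually $1\wedge y=y$ for the greatest element $1$. Thus $T$ and $S_n$ are cancellable in $\Sub(S_n)$.

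For the converse I would prove the contrapositive: every subgroup $G$ with $T<G<S_n$ (which forces $n\ge3$, the cases $n\le2$ being vacuous since then $S_n$ has no subgroup strictly between $T$ and $S_n$) fails to be cancellable. The engine is a conjugation trick. Given a subgroup $Y\le S_n$ and an element $h\in G$, put $Z=hYh^{-1}$. Since $h\in G\le\langle G,Y\rangle$ one checks at once that $\langle G,Y\rangle=\langle G,Z\rangle$, so $G\vee Y=G\vee Z$ holds automatically; and since $h$ normalises $G$, one has $G\cap Z=h(G\cap Y)h^{-1}$. Consequently, if I can choose a nontrivial $Y$ with $G\cap Y=T$ that is \emph{not} normalised by $G$, then picking $h\in G$ with $hYh^{-1}\ne Y$ yields $Y\ne Z$, $G\wedge Y=T=G\wedge Z$ and $G\vee Y=G\vee Z$, exhibiting $G$ as non-cancellable.

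The whole problem thus reduces to producing a nontrivial subgroup disjoint from $G$ that $G$ does not normalise, and I expect this existence statement to be the main obstacle. I would resolve it cleanly by taking $Y=\langle\tau\rangle$ for a transposition $\tau=(ab)$: then $G\cap Y=T$ as soon as $\tau\notin G$, and $N_{S_n}(\langle\tau\rangle)$ is exactly the setwise stabiliser of $\{a,b\}$, so ``$G$ does not normalise $Y$'' means ``$G$ does not fix $\{a,b\}$ setwise''. I would then split on the orbits of $G$ on $\{1,\dots,n\}$. If $G$ is intransitive, choose $a$ inside a nontrivial $G$-orbit $O_1$ (one exists since $G\ne T$) and $b$ in a different orbit $O_2$; then $(ab)\notin G$, and any $h\in G$ moving $a$ within $O_1$ sends $a$ to some $a'\in O_1$ with $a'\notin\{a,b\}$, so $G$ does not fix $\{a,b\}$. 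If $G$ is transitive, then since $G\ne S_n$ it cannot contain every transposition (transpositions generate $S_n$), so some $\tau=(ab)\notin G$; and a transitive group on $n\ge3$ points fixes no $2$-subset setwise, since transitivity supplies $h\in G$ with $ah\notin\{a,b\}$. In either case the required $\tau$ exists.

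Finally I would record the bookkeeping that closes the argument: in the conjugation step $Z=hYh^{-1}=\langle(ah,bh)\rangle$ differs from $Y=\langle(ab)\rangle$ precisely because $\{ah,bh\}\ne\{a,b\}$, and the meets $G\wedge Y$ and $G\wedge Z$ are both $T$ by the displayed conjugacy $G\cap Z=h(G\cap Y)h^{-1}=hTh^{-1}$. The only genuine idea is the reduction to a non-normalised disjoint subgroup together with the transitive/intransitive dichotomy; everything else is a routine verification of joins and meets in $\Sub(S_n)$.
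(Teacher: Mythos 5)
Your proof is correct, but it takes a genuinely different route from the paper's. The paper's argument first uses the fact that a cancellable element is modular, then invokes Je\v{z}ek's classification of modular elements of $\Sub(S_n)$ (forcing $G\supseteq V_4$ when $n=4$ and $G\supseteq A_n$ when $n\ge 5$), settles $n=3$ by inspecting the picture of $\Sub(S_3)$, and finishes by exhibiting, in each surviving case ($G=A_n$, $G=V_4$, or $G$ a Sylow $2$-subgroup $V_4\vee P_{ij,k\ell}$ of $S_4$), two distinct complements to $G$ read off from the explicit lattices $\Sub(S_3)$ and $\Sub(S_4)$. You instead give a self-contained, uniform argument for all $n\ge 3$: conjugation by $h\in G$ fixes $G$, preserves joins with $G$ and intersections with $G$, so any subgroup $Y$ with $G\cap Y=T$ that $G$ fails to normalise yields a non-cancellability witness $Z=hYh^{-1}\ne Y$; and your transitive/intransitive dichotomy correctly produces a transposition $(ab)\notin G$ whose support $\{a,b\}$ is not stabilised setwise by $G$ (the normaliser of $\langle(ab)\rangle$ being exactly that setwise stabiliser). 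Each step checks out, including the identity $G\cap hYh^{-1}=h(G\cap Y)h^{-1}$ and the existence claims in both orbit cases. What your approach buys is independence from Je\v{z}ek's modularity results and from the ``direct routine calculations'' behind Fig.~\ref{Sub(S_4)}; note also that your witnessing pairs $Y,Z$ need not be complements of $G$, which is why the construction is more flexible. What the paper's approach buys is brevity given machinery it already cites for other purposes, plus concrete complements in each case.
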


\begin{proof}
If $n\le 2$ then $S_n$ does not contain subgroups differ from $T$ and $S_n$. If $n=3$ then the desirable conclusion immediately follows from Fig.~\ref{Sub(S_3)}. Let now $n\ge 4$ and $G$ be a non-singleton proper subgroup of $S_n$. Suppose that $G$ is a cancellable and therefore, modular element of $\Sub(S_n)$. If $n=4$ then $G\supseteq V_4$ by~\cite[Proposition~3.8]{Jezek-81}, while if $n\ge 5$ then $G\supseteq A_n$ by~\cite[Proposition~3.1]{Jezek-81}. Clearly, it suffices to verify that there are at least two complements to $G$ in $\Sub(S_n)$. Suppose that $G\supseteq A_n$. Then $G=A_n$ because $A_n$ is a maximal proper subgroup in $S_n$. Then all subgroups of the form $T_{ij}$ are complements to $G$ in $\Sub(S_n)$. It remains to consider the case when $n=4$, $V_4\subseteq G\subset S_4$ and $G\ne A_4$. Fig.~\ref{Sub(S_4)} implies that either $G=V_4$ or $G=V_4\vee P_{ij,k\ell}$ for some disjoint transpositions $(ij)$ and $(k\ell)$. If $G=V_4$ then all subgroups of the form $\Stab_4(i)$ are complements to $G$ in $\Sub(S_4)$. Finally, if $G=V_4\vee P_{ij,k\ell}$ then subgroups $T_{ik}$ and $T_{j\ell}$ are complements to $G$ in $\Sub(S_4)$. 
\end{proof}

\section{The proof of Theorem~\ref{canc-permut-3}}
\label{proof}

For a semigroup variety \textbf V, we denote by $\Perm_n(\mathbf V)$ the set of all permutations $\pi\in S_n$ such that \textbf V satisfies the identity~\eqref{permut id}. Clearly, $\Perm_n(\mathbf V)$ is a subgroup in $S_n$. We need the following

\begin{lemma}
\label{modular from V to Perm_n(V)}
\!\!\textup{(\!\!\cite[Corollary~4.3]{Vernikov-07})} If a semigroup variety $\mathbf V$ is a modular element of the lattice $\mathbb{SEM}$ and $n$ is a positive integer then the group $\Perm_n(\mathbf V)$ is a modular element of the lattice $\Sub(S_n)$.\qed
\end{lemma}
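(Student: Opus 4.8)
The plan is to realise $\Perm_n$ as one half of a Galois connection between $\mathbb{SEM}$ and $\Sub(S_n)$ and then to push the defining equation of a modular element through it. For a subgroup $G\le S_n$ let $\mathbf V[G]$ denote the variety of all semigroups satisfying the identities $x_1\cdots x_n\approx x_{1\pi}\cdots x_{n\pi}$ for every $\pi\in G$. First I would record the elementary facts: $\Perm_n$ is order-reversing, it carries joins to meets (a variety in a join satisfies exactly the identities satisfied by both joinands, so $\Perm_n(\mathbf X\vee\mathbf Y)=\Perm_n(\mathbf X)\cap\Perm_n(\mathbf Y)$), and $G\mapsto\mathbf V[G]$ is order-reversing with $\mathbf V[G\vee H]=\mathbf V[G]\wedge\mathbf V[H]$. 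The one point that needs a genuine argument is that $\Perm_n(\mathbf V[G])=G$: this is a deduction statement saying that the permutational identities of length $n$ derivable from $\{x_1\cdots x_n\approx x_{1\pi}\cdots x_{n\pi}:\pi\in G\}$ are exactly those indexed by the subgroup generated by $G$, i.e. by $G$ itself. Since all words involved are multilinear of the fixed degree $n$, every derivation step merely renames variables, and a short substitution argument shows that no permutation outside $G$ can appear. Together these give an antitone Galois connection with $\Perm_n\circ\mathbf V[\cdot]=\mathrm{id}$, so $\mathbf V[\cdot]$ embeds $\Sub(S_n)$ into $\mathbb{SEM}$ order-reversingly.

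Put $G_0:=\Perm_n(\mathbf V)$. Since the inclusion $(G_0\wedge K)\vee H\subseteq(G_0\vee H)\wedge K$ holds in every lattice once $H\le K$, modularity of $G_0$ reduces to the reverse inclusion $(G_0\vee H)\wedge K\subseteq(G_0\wedge K)\vee H$ for all $H\le K$ in $\Sub(S_n)$. The idea is to read this off from the modular equation $(\mathbf V\vee\mathbf B)\wedge\mathbf A=(\mathbf V\wedge\mathbf A)\vee\mathbf B$ that $\mathbf V$ satisfies for the comparable pair $\mathbf B:=\mathbf V[K]\subseteq\mathbf A:=\mathbf V[H]$, after applying $\Perm_n$ and using $\Perm_n(\mathbf V)=G_0$, $\Perm_n(\mathbf A)=H$ and $\Perm_n(\mathbf B)=K$.

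Here lies the main obstacle, and it is genuine: $\Perm_n$ turns joins into meets but does \emph{not} turn meets into joins, so one cannot naively apply it to the two sides of the modular equation — doing so only yields the inequality that always holds and re-expresses the goal as itself. What actually has to be controlled is $\Perm_n(\mathbf V\wedge\mathbf V[H])$, i.e. which permutational identities of length $n$ are forced when the identities of $\mathbf V$ are combined with those coming from $H$; the whole point is that for a \emph{modular} $\mathbf V$ this intersection carries \emph{more} permutational identities than the subgroup $G_0\vee H$ would suggest, and it is precisely this surplus that forces the modular inequality for $G_0$. To get at it I would first reduce to the case in which $\mathbf V$ is a nil-variety: by Proposition~\ref{nec} a modular $\mathbf V$ is either $\mathbf{SEM}$ (whence $\Perm_n(\mathbf V)=T$ is trivially modular) or $\mathbf V=\mathbf M\vee\mathbf N$ with $\mathbf M\in\{\mathbf T,\mathbf{SL}\}$ and $\mathbf N$ a nil-variety, and since $\Perm_n(\mathbf T)=\Perm_n(\mathbf{SL})=S_n$ one has $\Perm_n(\mathbf V)=\Perm_n(\mathbf N)$, so the permutational data live entirely in the nil part. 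On nil-varieties the decisive tool is Lemma~\ref{split} (Sapir--Sukhanov): a multilinear word $x_1\cdots x_n$ can only be rewritten either to $0$ or to a permutation of itself, so combining identities cannot manufacture spurious permutational consequences, and the effect on the permutational spectrum of $\mathbf V$ of intersecting with $\mathbf V[H]$ becomes computable. The technical heart of the proof is therefore to make this computation precise — equivalently, to build from the group-theoretic data a faithful, meet-preserving encoding of $\Sub(S_n)$ by nil-varieties compatible with $\mathbf V$ — and to check that the modular equation for $\mathbf V$ then collapses to exactly $(G_0\vee H)\wedge K=(G_0\wedge K)\vee H$.
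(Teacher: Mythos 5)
Your proposal is not a complete proof; it contains a gap that you yourself flag and then leave open. Everything you actually establish --- that $\Perm_n$ is order-reversing, that $\Perm_n(\mathbf X\vee\mathbf Y)=\Perm_n(\mathbf X)\cap\Perm_n(\mathbf Y)$, that $\mathbf V[G\vee H]=\mathbf V[G]\wedge\mathbf V[H]$, and that $\Perm_n(\mathbf V[G])=G$ --- is the easy half of the argument. The entire content of the lemma sits in the behaviour of $\Perm_n$ on the two \emph{meets} occurring in the modular law, namely $\Perm_n(\mathbf V\wedge\mathbf V[H])$ and $\Perm_n\bigl((\mathbf V\vee\mathbf V[K])\wedge\mathbf V[H]\bigr)$, and your text stops exactly there: after correctly observing that $\Perm_n$ does not turn meets into joins, you write that ``the technical heart of the proof is therefore to make this computation precise'' and never make it. What is needed is a lemma of the form: \emph{if $\mathbf W$ is a nil-variety not satisfying $x_1x_2\cdots x_n\approx 0$, then $\Perm_n(\mathbf W\wedge\mathbf V[H])=\Perm_n(\mathbf W)\vee H$}. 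Applied twice --- to $\mathbf W=\mathbf V$ (after the reduction via Proposition~\ref{nec} to the nil case, which you do indicate) and to $\mathbf W=\mathbf V\vee\mathbf V[K]$, which is again a nil-variety --- and combined with the join-to-meet identity, this converts the modular law for $\mathbf V$ into $(G_0\vee H)\wedge K=(G_0\wedge K)\vee H$. Proving that lemma requires analyzing an arbitrary deduction of a permutational identity from the union of the two equational theories: one must show (using Lemma~\ref{split} for the steps coming from $\mathbf W$, and an analysis of the consequences of permutational identities for the steps coming from $\mathbf V[H]$) that every intermediate word remains a linear word of length $n$ and that each step multiplies the underlying permutation by an element of $\Perm_n(\mathbf W)$ or of $H$. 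None of this is in your write-up, and the degenerate case $\Perm_n(\mathbf V)=S_n$, where $\mathbf V$ does satisfy $x_1x_2\cdots x_n\approx 0$ and Lemma~\ref{split} gives no control, must be split off separately (it is harmless only because the top element of any lattice is modular).

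For comparison: the paper does not reprove this lemma (it is quoted from the earlier paper of Vernikov), but its proof of the cancellability analogue, Proposition~\ref{cancellable from V to Perm_n(V)}, exhibits precisely the machinery your sketch lacks. Note in particular that there the subgroups $X_i$ are encoded not by your ``pure'' varieties $\mathbf V[X_i]$ but by \emph{nil}-varieties, obtained by adjoining $x_1x_2\cdots x_{n+1}\approx 0$ and $\mathbf w\approx 0$ for all words $\mathbf w$ of length $n$ in fewer than $n$ letters; this makes the equational theory of the encoding variety completely transparent, so that the deduction argument (the sequence $\mathbf u=\mathbf u_0,\mathbf u_1,\dots,\mathbf u_k=\mathbf v$ in that proof) can be carried out. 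Your pure encoding $\mathbf V[G]$ can in principle be made to work, but only at the cost of an additional argument showing that every identity satisfied by $\mathbf V[H]$ has both sides of equal length at least $n$, so that such identities can never shorten, lengthen, or delinearize a word of length $n$; this, too, is absent from the proposal.
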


A word $\mathbf u$ is called \emph{linear} if every letter occurs in $\mathbf u$ at most once. If $\mathbf u$ is a word then we denote by $\ell(\mathbf u)$ the length of $\mathbf u$ and by $\con(\mathbf u)$ the set of letters occurring in $\mathbf u$.  For brevity, we will denote the identity~\eqref{permut id} by $p_n[\pi]$. If $\mathbf w$ is a word, $\con(\mathbf w)=\{x_1,x_2,\dots,x_n\}$ and $\xi\in S_n$ then we denote by $\xi[\mathbf w]$ the word obtained from $\mathbf w$ by the substitution $x_i\mapsto x_{i\xi}$ for all $i=1,2,\dots,n$.

\begin{proposition}
\label{cancellable from V to Perm_n(V)}
If a semigroup variety $\mathbf V$ is a cancellable element of the lattice $\mathbb{SEM}$ and $n$ is a positive integer then the group $\Perm_n(\mathbf V)$ is a cancellable element of the lattice $\Sub(S_n)$.
\end{proposition}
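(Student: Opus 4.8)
The plan is to reduce the cancellability of $G:=\Perm_n(\mathbf V)$ in $\Sub(S_n)$ to the cancellability of $\mathbf V$ in $\mathbb{SEM}$, by transporting an arbitrary pair of witnesses from $\Sub(S_n)$ up to $\mathbb{SEM}$. So let $H,K\in\Sub(S_n)$ satisfy $G\vee H=G\vee K$ and $G\cap H=G\cap K$; I must deduce $H=K$. The one structural fact to be used repeatedly is that a permutational identity $p_n[\pi]$ holds in a join $\mathbf A\vee\mathbf B$ if and only if it holds in both $\mathbf A$ and $\mathbf B$, whence $\Perm_n(\mathbf A\vee\mathbf B)=\Perm_n(\mathbf A)\cap\Perm_n(\mathbf B)$; dually, every semigroup lying in $\mathbf A\cap\mathbf B$ satisfies $p_n[\pi]$ for all $\pi$ in $\Perm_n(\mathbf A)\vee\Perm_n(\mathbf B)$, the subgroup of $S_n$ generated by $\Perm_n(\mathbf A)$ and $\Perm_n(\mathbf B)$.

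To each subgroup $F\le S_n$ I associate the nilpotent variety $\mathbf L_F$ defined by the identities $p_n[\pi]$ for all $\pi\in F$, together with $\mathbf w\approx0$ for every non-linear word $\mathbf w$ with $\ell(\mathbf w)=n$ and the identity $x_1x_2\cdots x_{n+1}\approx0$. The first step is a normal-form analysis of $\mathbf L_F$: an identity $\mathbf u\approx\mathbf w$ holds in $\mathbf L_F$ precisely when either both sides equal $0$ in $\mathbf L_F$ (that is, have length exceeding $n$, or length $n$ with a repeated letter), or $\mathbf u$ and $\mathbf w$ are linear words of length $n$ with $\mathbf w=\pi[\mathbf u]$ for some $\pi\in F$, or $\mathbf u=\mathbf w$. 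Granting this, $\Perm_n(\mathbf L_F)=F$, so $F\mapsto\mathbf L_F$ is injective and $\mathbf L_F\subseteq\mathbf L_{F'}$ whenever $F'\subseteq F$; moreover, for $F'\subseteq F$ the varieties $\mathbf L_F$ and $\mathbf L_{F'}$ differ only through identities $p_n[\pi]$ with $\pi\in F\setminus F'$ between linear words of length $n$.

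Now I compute the transported witnesses $\mathbf L_H$ and $\mathbf L_K$. For the meets: every semigroup of $\mathbf V\cap\mathbf L_H$ lies in both factors, hence satisfies $p_n[\pi]$ for all $\pi\in G\vee H$, so $\mathbf V\cap\mathbf L_H\subseteq\mathbf L_{G\vee H}$; since also $\mathbf L_{G\vee H}\subseteq\mathbf L_H$, this yields $\mathbf V\wedge\mathbf L_H=\mathbf V\wedge\mathbf L_{G\vee H}$, which depends only on $G\vee H$. For the joins I show $\mathbf V\vee\mathbf L_H=\mathbf V\vee\mathbf L_{G\cap H}$: the inclusion $\mathbf V\vee\mathbf L_H\subseteq\mathbf V\vee\mathbf L_{G\cap H}$ is immediate from $\mathbf L_H\subseteq\mathbf L_{G\cap H}$, and for the reverse it suffices to check that every identity holding in both $\mathbf V$ and $\mathbf L_H$ already holds in $\mathbf L_{G\cap H}$. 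By the normal form, the only identities of $\mathbf L_H$ that could fail in $\mathbf L_{G\cap H}$ are those between linear words of length $n$ realised by a permutation $\pi\in H$; but if such an identity also holds in $\mathbf V$ then $\pi\in\Perm_n(\mathbf V)=G$, so $\pi\in G\cap H$ and the identity does hold in $\mathbf L_{G\cap H}$.

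Combining these, $\mathbf V\vee\mathbf L_H=\mathbf V\vee\mathbf L_{G\cap H}=\mathbf V\vee\mathbf L_{G\cap K}=\mathbf V\vee\mathbf L_K$ and $\mathbf V\wedge\mathbf L_H=\mathbf V\wedge\mathbf L_{G\vee H}=\mathbf V\wedge\mathbf L_{G\vee K}=\mathbf V\wedge\mathbf L_K$. Since $\mathbf V$ is a cancellable element of $\mathbb{SEM}$, this forces $\mathbf L_H=\mathbf L_K$, and the injectivity of $F\mapsto\mathbf L_F$ gives $H=K$, as required. I expect the main obstacle to be the normal-form description of $\mathbf L_F$ in the second paragraph: one must verify that imposing the length-$n$ permutational identities produces no unexpected collapses among words of length below $n$ and creates no new zero words, so that the equational theories of $\mathbf L_F$ and $\mathbf L_{F'}$ genuinely differ only in the length-$n$ linear layer. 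This is most safely carried out by exhibiting the relatively free semigroups of $\mathbf L_F$ explicitly and checking that each application of an identity $p_n[\pi]$ either reproduces a length-$n$ linear permutation or takes place among words that are already zero.
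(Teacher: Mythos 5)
Your proposal is correct, and the transported witnesses are exactly the paper's own: your $\mathbf L_F$ coincides with the variety the paper attaches to a subgroup of $S_n$ (defined by $x_1x_2\cdots x_{n+1}\approx 0$, by $\mathbf w\approx 0$ for every non-linear word $\mathbf w$ of length $n$, and by $p_n[\pi]$ for $\pi\in F$). But the way you use this construction is genuinely different, and in two respects cleaner. The paper argues by contradiction and must first reduce to the case where $\mathbf V$ is a nil-variety (via Proposition~\ref{nec}), both to handle $\mathbf{SEM}$ and $\mathbf{SL}$ and so that Lemma~\ref{split} applies to the joins $\mathbf{V}\vee\mathbf X_i$; it also needs the modularity transfer result (Lemma~\ref{modular from V to Perm_n(V)}) to rule out the identity $x_1x_2\cdots x_n\approx 0$ in $\mathbf V$, $\mathbf X_1$, $\mathbf X_2$. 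You need none of this machinery: your argument applies to an arbitrary cancellable $\mathbf V$ as it stands. More substantially, the paper's hardest step --- proving $\mathbf{V}\wedge\mathbf X_1=\mathbf{V}\wedge\mathbf X_2$ --- is a syntactic analysis of shortest deductions; you replace it by the semantic observation that meets of varieties are class intersections and that the permutations $\pi$ for which $p_n[\pi]$ holds in a fixed semigroup form a subgroup of $S_n$, whence $\mathbf V\wedge\mathbf L_H=\mathbf V\wedge\mathbf L_{G\vee H}$ depends only on $G\vee H$. That is a real simplification, and it requires no knowledge of the equational theory of $\mathbf L_H$ at all. What the paper buys by its longer route is precisely that it never needs a full normal form for $\mathbf L_F$: your join step and the injectivity of $F\mapsto\mathbf L_F$ rest on that normal-form lemma, which you rightly single out as the outstanding debt. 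The lemma is true and your sketch is the correct proof --- the explicit model (words of length $<n$; classes of linear words of length $n$; a zero element) lies in $\mathbf L_F$ and realizes exactly the stated identifications --- but this verification must actually be written out for the proof to be complete.

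One technical caution for that write-up: with the paper's convention that $\pi[\mathbf u]$ is obtained by the substitution $x_i\mapsto x_{i\pi}$, the identities between linear words of length $n$ derivable from $\{p_n[\pi]:\pi\in F\}$ are $\mathbf u_\sigma\approx\mathbf u_\rho$ with $\rho\sigma^{-1}\in F$, where $\mathbf u_\sigma=x_{1\sigma}x_{2\sigma}\cdots x_{n\sigma}$ (right cosets of $F$), whereas ``$\mathbf w=\pi[\mathbf u]$ for some $\pi\in F$'' describes the condition $\sigma^{-1}\rho\in F$ (left cosets); for non-normal $F$ these relations differ. The invariant formulation --- the identity $\mathbf u\approx\mathbf w$ is a renaming of $p_n[\pi]$ for some $\pi\in F$ --- is what every application in your argument actually uses (in the join step you need exactly that the permutation associated with the identity lies in $G$ and in $H$, hence in $G\cap H$), so nothing in the architecture breaks, but the normal form should be stated in this invariant way to avoid a false claim.
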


\begin{proof}
Clearly, $\mathbf V$ is a modular  element of the lattice $\mathbb{SEM}$. By Proposition~\ref{nec}, either $\mathbf V=\mathbf{SEM}$ or $\mathbf V=\mathbf{M\vee N}$ where $\mathbf M$ is one of the varieties $\mathbf T$ or $\mathbf{SL}$, while  $\mathbf N$ is a nil-variety. It is evident that $\Perm_n(\mathbf{SEM})=T$ and $\Perm_n(\mathbf{SL\vee N})=\Perm_n(\mathbf N)$ for any $n$. Since $T$ is a cancellable element of $\Sub(S_n)$, we can assume that $\mathbf V=\mathbf N$. In particular, $\mathbf V$ is a nil-variety.

Put $V=\Perm_n(\mathbf V)$ for brevity. Suppose that $V$ is not a cancellable element of the lattice $\Sub(S_n)$. Then there are subgroups $X_1$ and $X_2$ of the group $S_n$ such that $V\vee X_1=V\vee X_2$ and $V\wedge X_1=V\wedge X_2$ but $X_1\ne X_2$. For $i=1,2$, we denote by $\mathbf X_i$ the variety given by the identity $x_1x_2\cdots x_{n+1}\approx 0$, all identities of the form $\mathbf w\approx 0$ where $\mathbf w$ is a word of length $n$ depending on $<n$ letters and all identities of the form $p_n[\pi]$ where $\pi\in X_i$. It is clear that $\mathbf X_1\ne\mathbf X_2$. 

One can verify that none of the varieties $\mathbf V$, $\mathbf X_1$ and $\mathbf X_2$ satisfies the identity~\eqref{nilpotence}. Indeed, suppose that this identity holds in $\mathbf V$. Then $\mathbf V$ satisfies all permutational identities of length $n$, whence $V=S_n$. But then
$$
X_1=V\wedge X_1=V\wedge X_2=X_2,
$$
contradicting the choice of the groups $X_1$ and $X_2$. Suppose now that the identity~\eqref{nilpotence} holds in $\mathbf X_1$. Then $X_1=S_n\supseteq X_2$. Lemma~\ref{modular from V to Perm_n(V)} implies that $V$ is a modular element of the lattice $\Perm_n(\mathbf V)$. Therefore,
\begin{align*}
X_1={}&(V\vee X_1)\wedge X_1&&\text{by the absorbtion law}\\
={}&(V\vee X_2)\wedge X_1&&\text{because }V\vee X_1=V\vee X_2\\
={}&(V\wedge X_1)\vee X_2&&\text{because }X_1\supseteq X_2\text{ and }V\text{ is modular in }\Perm_n(\mathbf V)\\
={}&(V\wedge X_2)\vee X_2&&\text{because }V\wedge X_1=V\wedge X_2\\
={}&X_2&&\text{by the absorbtion law},
\end{align*}
contradicting the choice of the groups $X_1$ and $X_2$ again. Analogous arguments show that the identity~\eqref{nilpotence} is not satisfied by $\mathbf X_2$.

Let now $\mathbf{u\approx v}$ be an arbitrary identity that holds in $\mathbf{V\vee X}_1$. We are going to verify that this identity holds in $\mathbf{V\vee X}_2$. Since $\mathbf{u\approx v}$ holds in $\mathbf V$, it suffices to verify that it holds in $\mathbf X_2$.  The identity  $\mathbf{u\approx v}$ holds in $\mathbf V$ and $\mathbf X_1$. If $\ell(\mathbf u),\ell(\mathbf v)\ge n+1$ then $\mathbf u\approx 0\approx \mathbf v$ in $\mathbf X_2$. Thus, we can assume without loss of generality that $\ell(\mathbf u)\le n$. On the other hand, the definition of the variety $\mathbf X_1$ and the fact that $\mathbf{u\approx v}$ holds in $\mathbf X_1$ imply that $\ell(\mathbf u),\ell(\mathbf v)\ge n$. In particular, $\ell(\mathbf u)=n$.

Suppose that the word $\mathbf u$ is linear. By Lemma~\ref{split}, either the identity $\mathbf{u\approx v}$ is permutational or the variety $\mathbf{V\vee X_1}$ satisfies the identity $\mathbf u\approx 0$. But we have proved above that the second case is impossible. Therefore, $\mathbf{u\approx v}$ is an identity of the form $p_n[\pi]$. Since it holds both in $\mathbf V$ and $\mathbf X_1$, we have $\pi\in V\wedge X_1=V\wedge X_2$. Hence $\pi\in X_2$, and therefore the identity $\mathbf{u\approx v}$ holds in $\mathbf X_2$. 

It remains to consider the case when the word $\mathbf u$ is non-linear. Then it depends on $<n$ letters. Therefore, $\mathbf u\approx 0$ in the varieties $\mathbf X_1$ and $\mathbf X_2$. If $\ell(\mathbf v)>n$ or $\mathbf v$ is a word of length $n$ depending on $<n$ letters then $\mathbf v\approx 0$ in $\mathbf X_2$. Then $\mathbf{u\approx v}$ holds in $\mathbf X_2$. Finally, if $\ell(\mathbf v)=n$ and $\mathbf v$ depends on $n$ letters then the word $\mathbf v$ is linear and we can complete our considerations by the same arguments as in the previous paragraph.

Thus, if the identity $\mathbf{u\approx v}$ holds in $\mathbf{V\vee X}_1$ then it holds in $\mathbf{V\vee X}_2$ too. This means that $\mathbf{V\vee X}_2\subseteq\mathbf{V\vee X}_1$. The inverse inclusion can be verified analogously, whence $\mathbf{V\vee X}_1=\mathbf{V\vee X}_2$.

Let now $\mathbf{u\approx v}$ be an arbitrary identity that holds in the variety $\mathbf{V\wedge X}_1$. We aim to verify that it holds in $\mathbf{V\wedge X}_2$. Let the sequence of words
\begin{equation}
\label{deduction}
\mathbf u=\mathbf u_0,\mathbf u_1,\dots,\mathbf u_k=\mathbf v
\end{equation}
be the shortest deduction of the identity $\mathbf{u\approx v}$ from the identities of the varieties $\mathbf V$ and $\mathbf X_1$. This means that, for any $i=0,1,\dots,k-1$, the identity $\mathbf u_i\approx\mathbf u_{i+1}$ holds in one of the varieties $\mathbf V$ and $\mathbf X_1$ and the situation when one of the varieties $\mathbf V$ and $\mathbf X_1$ satisfies the identities $\mathbf u_i\approx\mathbf u_{i+1}\approx\mathbf u_{i+2}$ for some $0\le i\le k-2$  is impossible. 

Suppose that there is an index $i$ such that $\mathbf u_i$ a linear word of length $n$. If $i>0$ then Lemma~\ref{split} implies that either $\mathbf u_{i-1}$ is a linear word and $\con(\mathbf u_{i-1})=\con(\mathbf u_i)$ or one of the varieties $\mathbf V$ or $\mathbf X_1$ satisfies the identity~\eqref{nilpotence}. Analogously, if $i<k$ then either $\mathbf u_{i+1}$ is a linear word and $\con(\mathbf u_i)=\con(\mathbf u_{i+1})$ or one of the varieties $\mathbf V$ and $\mathbf X_1$ satisfies the identity~\eqref{nilpotence}. But we have seen above that the last identity fails in both  the varieties $\mathbf V$ and $\mathbf X_1$. Therefore, the words adjacent to $\mathbf u_i$ in the sequence~\eqref{deduction} are linear words of length $n$ depending on the same letters as $\mathbf u_i$. By the trivial induction, this means that all the words $\mathbf u_0$, $\mathbf u_1$, \dots, $\mathbf u_k$ are linear words of length $n$ depending on the same letters. We may assume without loss of generality that $\con(\mathbf u)=\{x_1,x_2,\dots,x_n\}$. There are permutations $\pi_0,\pi_1,\dots,\pi_{k-1}\in S_n$ such that $\mathbf u_i=\pi_i[\mathbf u_{i-1}]$ for each $i=1,2,\dots,k$. Clearly, $\pi_i\in V$ [respectively $\pi_i\in X_1$] whenever the identity $\mathbf u_{i-1}\approx\mathbf u_i$ holds in the variety $\mathbf V$ [respectively $\mathbf X_1$]. Put $\pi=\pi_0\pi_1\cdots\pi_{k-1}$. Then $\pi\in V\vee X_1=V\vee X_2$. Therefore, there are permutations $\sigma_0,\sigma_1,\dots,\sigma_{m-1}\in S_n$ such that $\pi=\sigma_0\sigma_1\cdots\sigma_{m-1}$ and, for all $i=0,1,\dots,m-1$, the permutation $\sigma_i$ lies in either $V$ or $X_2$. Put $\mathbf v_0=\mathbf u$ and $\mathbf v_i=\sigma_i[\mathbf u_{i-1}]$ for each $i=1,2,\dots,m$. Obviously, $\mathbf v_m=\mathbf v$ and, for any $i=0,1,\dots,m-1$, the identity $\mathbf v_i\approx\mathbf v_{i+1}$ holds in one of the varieties $\mathbf V$ or $\mathbf X_2$. Therefore, the identity $\mathbf{u\approx v}$ holds in the variety $\mathbf{V\wedge X}_2$.

Suppose now that there are no linear words of length $n$ among the words $\mathbf u_0$, $\mathbf u_1$, \dots, $\mathbf u_k$. The definition of the variety $\mathbf X_1$ shows that if this variey satisfies an identity $\mathbf p=\mathbf q$ then $\ell(\mathbf p),\ell(\mathbf q)\ge n$. This means that:
\begin{itemize}
\item[$\bullet$] $\ell(\mathbf u_i)\ge n$ for all $i=1,2,\dots,k-1$;
\item[$\bullet$] either $\ell(\mathbf u_0)\ge n$ or the identity $\mathbf u_0\approx\mathbf u_1$ holds in $\mathbf V$;
\item[$\bullet$] either $\ell(\mathbf u_k)\ge n$ or the identity $\mathbf u_{k-1}\approx\mathbf u_k$ holds in $\mathbf V$.
\end{itemize}
In view of we say above, if $\ell(\mathbf u_i)=n$ for some $0\le i\le k$ then the word $\mathbf u_i$ is non-linear. Then the sequence~\eqref{deduction} is a deduction of the identity $\mathbf{u\approx v}$ from the identities of the varieties $\mathbf V$ and $\mathbf X_2$. Thus, we have again that  the identity $\mathbf{u\approx v}$ holds in the variety $\mathbf{V\wedge X}_2$.

We prove that if the identity $\mathbf{u\approx v}$ holds in $\mathbf{V\wedge X}_1$ then it holds in $\mathbf{V\wedge X}_2$. Therefore, $\mathbf{V\wedge X}_2\subseteq\mathbf{V\wedge X}_1$. The inverse inclusion can be verified analogously, whence $\mathbf{V\wedge X}_1=\mathbf{V\wedge X}_2$.

Thus, $\mathbf{V\vee X}_1=\mathbf{V\vee X}_2$ and $\mathbf{V\wedge X}_1=\mathbf{V\wedge X}_2$. Since $\mathbf V$ is cancellable in $\mathbb{SEM}$, this implies that $\mathbf X_1=\mathbf X_2$. But then $X_1=X_2$, contradicting the choice of the groups $X_1$ and $X_2$.
\end{proof}

Proposition~\ref{cancellable from V to Perm_n(V)} and Lemma~\ref{subgroups cancellable} immediately imply the following 

\begin{proposition}
\label{alternative for cancellable}
Let $\mathbf V$ be a cancellable element of the lattice $\mathbb{SEM}$ and $n$ be a natural number. If $\mathbf V$ satisfies some permutational identity of length $n$ then it satisfies all such identities.\qed
\end{proposition}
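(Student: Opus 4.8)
The plan is to read the statement directly off Proposition~\ref{cancellable from V to Perm_n(V)} and Lemma~\ref{subgroups cancellable}, since these two results already carry all the substantive content. The one idea needed is to translate the hypothesis and the conclusion, both of which are assertions about which permutational identities $\mathbf V$ satisfies, into purely lattice-theoretic statements about the subgroup $\Perm_n(\mathbf V)$ of $S_n$.

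First I would recall that, by definition, $\Perm_n(\mathbf V)$ is the set of all $\pi\in S_n$ for which $\mathbf V$ satisfies the identity $p_n[\pi]$. Since a permutational identity of length $n$ is exactly an identity $p_n[\pi]$ with $\pi$ non-trivial, the assumption that $\mathbf V$ satisfies \emph{some} permutational identity of length $n$ means precisely that $\Perm_n(\mathbf V)$ contains a non-trivial permutation, that is, $\Perm_n(\mathbf V)\ne T$. Likewise, the desired conclusion that $\mathbf V$ satisfies \emph{all} permutational identities of length $n$ is equivalent to the equality $\Perm_n(\mathbf V)=S_n$.

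Next I would invoke the two cited results. Because $\mathbf V$ is a cancellable element of $\mathbb{SEM}$, Proposition~\ref{cancellable from V to Perm_n(V)} shows that $\Perm_n(\mathbf V)$ is a cancellable element of the lattice $\Sub(S_n)$. By Lemma~\ref{subgroups cancellable}, the only cancellable elements of $\Sub(S_n)$ are $T$ and $S_n$, so $\Perm_n(\mathbf V)$ must coincide with one of these two subgroups. Combining this dichotomy with the translation of the first paragraph closes the argument: the hypothesis excludes the possibility $\Perm_n(\mathbf V)=T$, hence $\Perm_n(\mathbf V)=S_n$, which is exactly the statement that $\mathbf V$ satisfies every permutational identity of length $n$.

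I do not expect any real obstacle in this step, and indeed the preceding text already flags that the implication is \emph{immediate}. The genuinely hard part — pushing cancellability of $\mathbf V$ in $\mathbb{SEM}$ down to cancellability of $\Perm_n(\mathbf V)$ in $\Sub(S_n)$ — was dealt with in Proposition~\ref{cancellable from V to Perm_n(V)}, and the determination of which subgroups are cancellable was settled in Lemma~\ref{subgroups cancellable}. All that remains here is the elementary bookkeeping identifying ``contains a non-trivial permutation'' with ``differs from $T$'' and ``equals $S_n$'' with ``satisfies all permutational identities of length $n$,'' so the proof reduces to a single line once the two ingredients are in place.
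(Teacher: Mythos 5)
Your proposal is correct and is exactly the paper's argument: the paper derives this proposition as an immediate consequence of Proposition~\ref{cancellable from V to Perm_n(V)} and Lemma~\ref{subgroups cancellable}, precisely by the translation you spell out (hypothesis means $\Perm_n(\mathbf V)\ne T$, the lemma's dichotomy then forces $\Perm_n(\mathbf V)=S_n$, which is the conclusion). Your write-up simply makes explicit the bookkeeping the paper leaves to the reader.
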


Proposition~\ref{alternative for cancellable} and~\cite[Theorem~4.5]{Vernikov-07} imply the following

\begin{corollary}
\label{non-linear = 0}
Let $\mathbf V$ be a cancellable element of the lattice $\mathbb{SEM}$. If $\mathbf V$ satisfies some permutational identity of length $n\ge 3$ then it satisfies all identities of the form $\mathbf u\approx 0$ where $\mathbf u$ is a word of length $n$ depending on $<n$ letters.\qed
\end{corollary}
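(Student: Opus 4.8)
The plan is to chain the two statements quoted immediately before the corollary, after a short reduction. Since every cancellable element of a lattice is modular, $\mathbf V$ is a modular element of $\mathbb{SEM}$, and Proposition~\ref{nec} gives either $\mathbf V=\mathbf{SEM}$ or $\mathbf V=\mathbf{M\vee N}$ with $\mathbf M\in\{\mathbf T,\mathbf{SL}\}$ and $\mathbf N$ a nil-variety. The variety $\mathbf{SEM}$ satisfies no non-trivial identity, hence no permutational one, so the hypothesis discards that case and leaves $\mathbf V=\mathbf{M\vee N}$; in particular the non-linear content of $\mathbf V$ lives on the nil factor $\mathbf N$.

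First I would upgrade the lone permutational identity supplied by the hypothesis to the whole family of length $n$. By assumption $\mathbf V$ satisfies some identity $p_n[\pi]$ with $\pi$ non-trivial, so $\Perm_n(\mathbf V)\ne T$; Proposition~\ref{alternative for cancellable} then forces $\mathbf V$ to satisfy every permutational identity of length $n$, that is, $\Perm_n(\mathbf V)=S_n$. This is precisely the step where cancellability, and not merely modularity, is decisive: Lemma~\ref{modular from V to Perm_n(V)} alone would only tell us that $\Perm_n(\mathbf V)$ is a modular subgroup of $\Sub(S_n)$, whereas Proposition~\ref{cancellable from V to Perm_n(V)} together with Lemma~\ref{subgroups cancellable} confines $\Perm_n(\mathbf V)$ to the two extreme subgroups $T$ and $S_n$, and the hypothesis has already excluded $T$.

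It remains to convert the equality $\Perm_n(\mathbf V)=S_n$ into the collapsing identities, which is exactly the content of \cite[Theorem~4.5]{Vernikov-07}: for a modular element of $\mathbb{SEM}$, validity of all permutational identities of a fixed length $n\ge 3$ already entails $\mathbf u\approx 0$ for every word $\mathbf u$ of length $n$ depending on fewer than $n$ letters. Applying this for our value of $n$ finishes the argument. I expect the only genuine obstacle to sit inside that cited theorem, since the passage from one permutational identity to all of them is already packaged in Proposition~\ref{alternative for cancellable}, which in turn rests on the work done in Proposition~\ref{cancellable from V to Perm_n(V)}. Two points deserve care in the write-up: the hypothesis $n\ge 3$ is indispensable, because for $n=2$ the equality $\Perm_2(\mathbf V)=S_2$ expresses only commutativity and collapses no word (the sole candidate $x^2$ need not become a zero); and one should keep in mind that the resulting zero-identities are carried by the nilpotent factor $\mathbf N$, where such identities naturally reside.
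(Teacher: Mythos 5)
Your proposal is correct and follows essentially the same route as the paper: the paper derives the corollary exactly by combining Proposition~\ref{alternative for cancellable} (which itself rests on Proposition~\ref{cancellable from V to Perm_n(V)} and Lemma~\ref{subgroups cancellable}) with Theorem~4.5 of~\cite{Vernikov-07}. The preliminary reduction via Proposition~\ref{nec} and the remarks on the case $n=2$ are harmless additions not needed in the paper's one-line deduction.
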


We say that a semigroup variety has a \emph{degree} $n$ if all nilsemigroups in this variety nilpotent of degree $\le n$ and $n$ is the least number with such a property. 

\begin{proof}[Proof of Theorem~\textup{\ref{canc-permut-3}}. Necessity.]
 Let $\mathbf V$ be a semigroup variety that satisfies a permutational identity of length~3 and is a cancellable element of the lattice $\mathbb{SEM}$. Clearly, $\mathbf V$ is a modular element of $\mathbb{SEM}$. In view of Proposition~\ref{nec}, $\mathbf V=\mathbf{M\vee N}$ where $\mathbf M$ is one of the varieties $\mathbf T$ or $\mathbf{SL}$, while  $\mathbf N$ is a nil-variety. Proposition~\ref{alternative for cancellable} implies that $\mathbf V$ and therefore, $\mathbf N$ satisfies the identities~\eqref{xyz=yxz=xzy}. Now Corollary~\ref{non-linear = 0} applies with the conclusion that the identity~\eqref{xxy=0} holds in $\mathbf N$.

\medskip

\emph{Sufficiency.} Let $\mathbf V=\mathbf{M\vee N}$ where $\mathbf M$ is one of the  varieties $\mathbf T$ or $\mathbf{SL}$, while the variety $\mathbf N$ satisfies the identities~\eqref{xyz=yxz=xzy} and~\eqref{xxy=0}. We need to verify that $\mathbf V$ is cancellable. In view of Lemma~\ref{join with SL} we can assume that $\mathbf V=\mathbf N$. The identities~\eqref{xyz=yxz=xzy} imply $xyz\approx zyx$. Proposition~\ref{mod-permut-3} implies then that the variety $\mathbf N$ is a modular element of $\mathbb{SEM}$. Suppose that $\mathbf N$ is not a cancellable element of $\mathbb{SEM}$.  Let $\mathbf Y$ and $\mathbf Z$ be semigroup varieties such that $\mathbf N\vee\mathbf Y=\mathbf N\vee\mathbf Z$ and $\mathbf N\wedge\mathbf Y=\mathbf N\wedge\mathbf Z$. In view of Lemma~\ref{modular non-cancellable}, there is a variety $\mathbf N'$ such that
$$
\mathbf{N'\subseteq N},\,\mathbf{N'\vee Y}=\mathbf{N'\vee Z}=\mathbf{Y\vee Z}\text{ and }\mathbf{N'\wedge Y}=\mathbf{N'\wedge Z}.
$$
It is evident that the identities~\eqref{xyz=yxz=xzy} and~\eqref{xxy=0} implies
\begin{equation}
\label{xyx=yxx=0}
xyx\approx yx^2\approx 0.
\end{equation}
Being a subvariety of \textbf N, the variety $\mathbf N'$ satisfies the identities~\eqref{xyz=yxz=xzy},~\eqref{xxy=0} and~\eqref{xyx=yxx=0}. We aim to verify that $\mathbf Y=\mathbf Z$. This completes the proof because contradicts the claim that $\mathbf N$ is not a cancellable element of $\mathbb{SEM}$. By symmetry, it suffices to check that $\mathbf{Z\subseteq Y}$. Let $\mathbf{u\approx v}$ be an arbitrary identity that holds in $\mathbf Y$. It suffices to prove that this identity holds in $\mathbf Z$.

First of all we note that if the identity $\mathbf{u\approx v}$ holds in $\mathbf N'$ then it holds also in $\mathbf N'\vee\mathbf Y=\mathbf N'\vee\mathbf Z$  and therefore, in $\mathbf Z$. In particular, this allows us to assume that one of the words $\mathbf u$ and $\mathbf v$, say $\mathbf u$, does not equal to~0 in $\mathbf N'$. On the other hand, the fact that $\mathbf N'$ satisfies the identities~\eqref{xyz=yxz=xzy},~\eqref{xxy=0} and~\eqref{xyx=yxx=0} implies that every non-linear word except $x^2$ equals to~0 in $\mathbf N$ and therefore, in $\mathbf N'$. Thus, we can assume that either the word $\mathbf u$ is linear or $\mathbf u=x^2$. The latter case can be considered by literally repeating arguments from the consideration of the same case in the proof of sufficiency of Theorem~1.1 in~\cite{Gusev-Skokov-Vernikov-18+}. Suppose now that the word $\mathbf u$ is linear. The case when the identity $\mathbf{u\approx v}$ is not permutational also can be considered by literally repeating arguments from the consideration of the same case in the proof of sufficiency of Theorem~1.1 in~\cite{Gusev-Skokov-Vernikov-18+}. Finally, suppose that the identity $\mathbf{u\approx v}$ is permutational. If its length is greater than 2 then the identity $\mathbf{u\approx v}$ holds in $\mathbf N$ and therefore, in $\mathbf N'$. We reduce the proof to the case then the identity $\mathbf{u\approx v}$ is the commutative law. 

Thus, the variety $\mathbf Y$ is commutative. The identity $xy\approx yx$ holds in the variety $\mathbf N'\wedge\mathbf Y=\mathbf N'\wedge\mathbf Z$. Therefore, there is a deduction of this identity from identities of the varieties $\mathbf N'$ and $\mathbf Z$. In particular, one of these varieties satisfies an identity of the form $xy\approx\mathbf w$. Suppose that this identity holds in $\mathbf N'$. Then Lemma~\ref{split} implies that $\mathbf N'$ either is commutative or satisfies the identity $xy\approx 0$. Clearly, $\mathbf N'$ is commutative always. Therefore, the commutative law holds in the variety $\mathbf N'\vee\mathbf Y=\mathbf N'\vee\mathbf Z$. Then the variety $\mathbf Z$ is commutative, and we are done.

It remains to consider the case when the identity $xy\approx\mathbf w$ holds in $\mathbf Z$. Lemma~\ref{split} implies that in this case either the variety $\mathbf Z$ is commutative or all nilsemigroups in $\mathbf Z$ are semigroups with zero multiplication. In the former case we are done. It remains to consider the latter case. In other words, $\mathbf Z$ is a variety of degree $\le 2$. Repeating literally arguments of the proof of~\cite[Lemma~3.1]{Gusev-Skokov-Vernikov-18+}, we can verify that the varieties $\mathbf Y$ and $\mathbf Z$ has the same degree. Therefore, $\mathbf Y$ is a variety of degree $\le 2$ too. By~\cite[Lemma~3]{Golubov-Sapir-82} or~\cite[Proposition~2.11]{Vernikov-08} this means that each of the varieties $\mathbf Y$ and $\mathbf Z$ saisfies one of the identities
\begin{align}
\label{left} xy&\approx x^{m+1}y,\\
\label{right} xy&\approx xy^{m+1},\\
\label{center} xy&\approx (xy)^{m+1}
\end{align}
for some natural $m$. Clearly, if $\mathbf Y$ [respectively $\mathbf Z$] satisfies one of these identities with $m=r$ [respectively $m=s$] then both the varieties satisfy corresponding identities with $m=rs$. Thus, we can assume that values of the parameter $m$ for $\mathbf Y$ and $\mathbf Z$ coincide. Further considerations are divided into seven cases.

\medskip

\emph{Case} 1: the variety $\mathbf Z$ satisfies the identity~\eqref{center}. Being commutative, the variety $\mathbf Y$ satisfies the identity $(xy)^{m+1}\approx(yx)^{m+1}$. Therefore, this identity holds in $\mathbf N'\vee\mathbf Y=\mathbf N'\vee\mathbf Z$. In particular, it holds in $\mathbf Z$. Hence $\mathbf Z$ satisfies the identities $xy\approx(xy)^{m+1}\approx(yx)^{m+1}\approx yx$.

\medskip

\emph{Case} 2: the varieties $\mathbf Y$ and $\mathbf Z$ satisfy the identity~\eqref{left}. Here $\mathbf Y$ satisfies the identities $x^{m+1}y\approx xy\approx yx\approx y^{m+1}x$. Therefore, the identity $x^{m+1}y\approx y^{m+1}x$ holds in $\mathbf N'\vee\mathbf Y=\mathbf N'\vee\mathbf Z$. In particular, it holds in $\mathbf Z$. Hence $\mathbf Z$ satisfies the identities $xy\approx x^{m+1}y\approx y^{m+1}x\approx yx$. 

\medskip

\emph{Case} 3: the varieties $\mathbf Y$ and $\mathbf Z$ satisfy the identity~\eqref{right}. This case is dual to the previous one.

\medskip

\emph{Case} 4: the varieties $\mathbf Y$ and $\mathbf Z$ satisfy the identities~\eqref{left} and~\eqref{right} respectively. Here $\mathbf Y$ satisfies the identities $xy^{m+1}\approx y^{m+1}x\approx yx\approx xy\approx x^{m+1}y\approx yx^{m+1}$. Therefore, the identity $xy^{m+1}\approx yx^{m+1}$ holds in $\mathbf N'\vee\mathbf Y=\mathbf N'\vee\mathbf Z$. In particular, it  holds in $\mathbf Z$. Hence $\mathbf Z$ satisfies the identities $xy\approx xy^{m+1}\approx yx^{m+1}\approx yx$.

\medskip

\emph{Case} 5: the varieties $\mathbf Y$ and $\mathbf Z$ satisfy the identities~\eqref{right} and~\eqref{left} respectively. This case is dual to the previous one.

\medskip

\emph{Case} 6: the varieties $\mathbf Y$ and $\mathbf Z$ satisfy the identities~\eqref{center} and~\eqref{left} respectively. Here $\mathbf Y$ satisfies the identity $x^{m+1}y\approx(x^{m+1}y)^{m+1}$. Therefore, this identity holds in $\mathbf N'\vee\mathbf Y=\mathbf N'\vee\mathbf Z$ and therefore, in $\mathbf Z$. Hence $\mathbf Z$ satisfies the identities $xy\approx x^{m+1}y\approx(x^{m+1}y)^{m+1}\approx(xy)^{m+1}$. We reduce the situation to the Case 1 considered above.

\medskip

\emph{Case} 7: the varieties $\mathbf Y$ and $\mathbf Z$ satisfy the identities~\eqref{center} and~\eqref{right} respectively. This case is dual to the previous one.
\end{proof}

\medskip

\end{document}